\providecommand{\tabularnewline}{\\}
\theoremstyle{plain}
\newtheorem{thm}{\protect\theoremname}[section]
\theoremstyle{plain}
\newtheorem{lem}[thm]{\protect\lemmaname}
\theoremstyle{remark}
\newtheorem{rem}[thm]{\protect\remarkname}
\providecommand{\lemmaname}{Lemma}
\providecommand{\remarkname}{Remark}
\providecommand{\theoremname}{Theorem}
\begin{document}
\baselineskip .275in

\title{Analytical Results on the Service Performance of Stochastic Clearing
Systems}
\author{$\text{Bo Wei}$$\qquad$$\text{S{\i}la \c{C}etinkaya}$$\qquad$ Daren
B.H. Cline}
\date{}
\maketitle
\begin{abstract}
\noindent Stochastic clearing theory has wide spread applications in the context of supply chain and service operations management. Historical application domains include bulk service queues, inventory control, and transportation planning (e.g., vehicle dispatching and shipment consolidation).  In this paper, motivated by a fundamental application in shipment consolidation, we revisit the notion of service performance for stochastic clearing system operation. More specifically, our goal is to evaluate and compare service performance of alternative operational policies for clearing decisions, as quantified by a measure of timely service referred as \emph{Average Order Delay} ($AOD$). All stochastic clearing systems are subject to service delay due to the inherent clearing practice, and $AOD$ can be thought of as a benchmark for evaluating timely service. Although stochastic clearing theory has a long history, existing literature on the analysis of $AOD$ as a service measure has several limitations. Hence, we extend the previous analysis by proposing a more general method for a generic analytical derivation of $AOD$ for any renewal-type clearing policy, including but not limited to alternative shipment consolidation policies in the previous literature. Our proposed method utilizes a new martingale point of view and lends itself for a generic analytical characterization of $AOD$, leading to a complete comparative analysis of alternative renewal-type clearing policies. Hence, we also close the gaps in literature on shipment consolidation via a complete set of analytically provable results regarding $AOD$ which were only illustrated through numerical tests previously. \\

\noindent \textbf{Keywords:} Stochastic clearing theory, renewal-type clearing policies, martingales, truncated random variables, shipment consolidation.

\end{abstract}

\newpage 
\section{Introduction \& Motivation \label{sec:Introduction}}

\emph{"Stochastic clearing systems are characterized by a stochastic input process and an output mechanism that intermittently clears the system, i.e., instantaneously restores the net quantity in the system to zero \cite{Stid74}.}"  In the seminal paper by Stidham \cite{Stid74}, the system is cleared when the quantity in the system, exceeds a fixed threshold, and the explicit expression of the limiting distribution of the quantity in the system is derived. The concepts of stochastic clearing theory have found wide spread applications in the context of supply chain and service operations management. Historical application domains include bulk service queues, inventory control, and transportation planning (e.g., vehicle dispatching and shipment consolidation).

Applications to bulk-service queues and $(s, S)$ inventory systems are introduced in \cite{Stid77} where the goal is to compute the optimal level of $q$ to minimize the average cost including the fixed clearing and variable holding costs. Also, see \cite{EM13} for more recent work, addressing a stochastic clearing problem arising in the context of a queuing theoretic transportation application, along with related literature. In \cite{KPS03}, the stock level process is assumed to be a superposition of a drifted Brownian motion and a compound Poisson process, reflected at zero, and some cost functionals for this system are introduced under several clearing policies. 

In the context of vehicle dispatching, the problem is to determine the capacity and instants in time at which vehicles are dispatched leading to a dispatching (clearing) policy \cite{RT82,ROSS69,TZ79,ZT80}. In another seminal paper by Ross \cite{ROSS69}, the goal is to compute an optimal dispatching policy under a Poisson input process $N(t)$ with rate $\lambda(t)$, where all items are dispatched at time $T$. An intermediate dispatch time needs to be selected to  minimize the total waiting time of all items, and it is demonstrated that the optimal intermediate dispatch time should be the smallest $t$, such that $N(t)\geq\lambda(t)(T-t)$. The vehicle dispatching with non-stationary Poisson arrival is studied in \cite{RT82}, and the optimal dispatching policy is characterized by using an impulsive control approach. Three alternative policies for vehicle dispatching are discussed in \cite{TZ79}: (i) a $C$-capacity policy; (ii) a dispatching frequency policy $T$; (iii) a $(T,C)$ policy. The average cost models are derived under the three policies, and two firm models with cooperative and non-cooperative solution modes are discussed. In \cite{ZT80}, a random vehicle dispatching problem with options to send rented vehicles is considered, and it determines the firm's optimal fleet size to minimize the average cost.
    
Shipment consolidation is the strategy of combining small size shipments or customer orders, i.e., input process realizations, into a larger load with the goals of achieving scale economies and increasing resource, i.e., truck, utilization \cite{CETI04,CB03,CL00} at the time of shipment-release. In the context of shipment consolidation, customer orders represent the stochastic input process. The consolidated loads are released at some specific times that correspond to clearing instances, and the consolidation practice is subject to fixed clearing (shipment-release) and variable holding costs. The problem at hand is to make decisions regarding the timing and quantity of consolidated shipments, leading to a clearing policy. Hence, a shipment consolidation system can be treated as a stochastic clearing system. The consolidation practice is well-justified because  a fixed cost is incurred associated with each shipment-release, so it is not economical to dispatch the customer orders immediately, i.e., one by one. 

In this paper, motivated by a fundamental application in shipment consolidation, we revisit the notion of service performance for stochastic clearing system operation. All stochastic clearing systems are subject to service delay due to the inherent clearing practice. Our goal here is to evaluate and compare such delay under alternative operational policies for clearing decisions, as quantified by a basic measure of timely service (service performance), referred as \emph{Average Order Delay} ($AOD$). Although stochastic clearing theory has a long history, existing literature on the analysis of service performance, in general, and $AOD$, in particular, has several limitations as we discuss next in Section \ref{lit}. Hence, we extend the previous literature on stochastic clearing by proposing a more general method for a generic analytical derivation of $AOD$ for any renewal-type clearing policy, including but not limited to alternative shipment consolidation policies investigated earlier. 

The remainder of this paper is organized as follows. A detailed summary of previously established results are summarized in Section \ref{lit}. In Section \ref{sec:Average-Order-Delay},
using a martingale associated with the underlying stochastic input (Poisson order) process, we
provide a unified formula to calculate $AOD$ under different renewal-type
clearing policies of interest. Section \ref{sec:Properties-on-Truncated}
includes the properties of truncated random variables of interest, which are the key to prove the comparative results among alternative policies in
terms of $AOD$. The comparative results under a fixed shipment-release frequency are given in Section \ref{sec:Comparison-of-AOD}, and additional results
under fixed policy parameters are discussed in Section \ref{sec:Comparison-of-AOD-under fixed q and T}.
Finally, the paper concludes in Section \ref{sec:Conclusion}

\section{Relevant Literature \& Contributions of Current Paper}
\label{lit}
Within the existing literature on stochastic clearing, the paper that is most closely related to our work is \cite{CMW14} which is motivated by shipment consolidation applications. More specifically, in \cite{CMW14}, the customer orders (stochastic input) follow a Poisson process $N(t)$ with rate $\lambda$ and the problem at hand is to make decisions regarding the timing and quantity of consolidated shipments (clearing decisions). The paper places an emphasis on the importance of examining the timely delivery (i.e., service delay) implications of shipment consolidation under practically motivated policies with clearing characteristics. To the best of our knowledge, this is the only previous paper that pursues an \emph{analytical characterization} of service performance in the context of a clearing system, in general, and a shipment consolidation application, in particular. \emph{Simulation studies} addressing the notion of service delay (as it relates to inventory holding and customer waiting) in shipment consolidation can be found in \cite{HB94} and \cite{CML06}. For a detailed account of the analytical models for shipment consolidation, see \cite{CETI04}. For more recent analytical work emphasizing computation of optimal shipment consolidation policies and their applications in vendor-managed inventory systems and last mile distribution can be found in \cite{BCH11,CHB14,CMW14,CTL08,CML06,CL17,KO13,LWZ16,MCB10,MC10,SEB18,UB12}. 

The analysis in \cite{CMW14} is based on the general theory of renewal/Poisson processes, and it offers some basic results for computing and comparing various service measures of practical interest. However, the approach has limitations in the sense that the analysis does not result in a full-scale analytical comparison of $AOD$ among the policies. Instead, the comparative results are supported by a numerical investigation, and, hence, the conclusions are dependent on parametric settings considered. 

Here, we revisit the exact same problem setting in \cite{CMW14} by treating it as a stochastic clearing system. We are able extend the previous analysis by proposing a generic analytical approach amenable for a comparative analysis applicable under any renewal-type clearing policy, including but not limited to the alternative shipment consolidation policies identified and studied in the literature \cite{BCH11,CHB14,CMW14,MCB10}. 
Unlike the previous work in the area, our proposed method  utilizes a new martingale point of view and lends itself for a full analytical characterization of $AOD$, leading to a complete comparative analysis among renewal-type clearing policies. In particular, we are able to close the gaps in the previous literature via a complete set of analytically provable results (some of which were only demonstrated through numerical tests in \cite{CMW14}) along with a more general approach for examining service performance of stochastic clearing systems.

We consider the three general types of clearing policies common in shipment consolidation practice but applicable in the context of other stochastic clearing applications in queuing systems, inventory control, and transportation: (i) the first one is  quantity-based policy (QP) which achieves economies of scale; (ii) the second is called time-based policy (TP) which assures timely delivery; and (iii) the last one is the hybrid policy (HP) which is aimed at balancing the tradeoff between economies of scale and timely delivery. For the sake of completeness, we recall the definitions of the specific policies examined here and in the previous literature as described exactly in \cite{CMW14} where the term (i) "order" refers to "input/input process", (ii) "consolidated load/shipment" refers to "accumulated input", (iii) "release/shipment-release" refers to "clear/clearing":
\begin{itemize}
\item QP is aimed at consolidating a load of $q$ units before releasing
a shipment;
\item Under TP1, a shipment is released every $T$ units of time, and all orders that arrive between the two shipment-release epochs are consolidated;
\item Under TP2, the arrival time of the first order after a shipment-release decision is recorded, and the next shipment is released $T$ time units after the arrival time of the first order;
\item Under HP1, the goal is to consolidate a load of size $q$. However,
if the time since the last shipment-release epoch exceeds $T$ , then a shipment-release decision is made anyways;
\item Under HP2, the goal is also to consolidate a load of size $q$; but,
if the waiting time of the first order after the last shipment-release epoch exceeds $T$, then a consolidated load is released immediately.
\end{itemize}
Under TP1 and HP1, there may be empty shipments, which happens when
$N(T)=0$. In order to address this issue, we propose two revised policies, i.e., revised TP1 and revised
HP1, which do not allow empty shipments.
\begin{itemize}
\item Under revised TP1, a shipment is released every $T$ units of time as
long as the consolidated load is not 0. However, if there is no order
arriving within $T$ units of time since the last shipment, we do
not release a shipment, but continue consolidating for another multiple of $T$ units of time.
\item Under revised HP1, the goal is to consolidate a load of size $q$.
However, if the time since the last shipment epoch exceeds $T$ and
the consolidated load is positive, then the load is dispatched; on
the other hand, if the time since the last shipment exceeds $T$ and
the consolidated load is zero, we do not release a shipment and the system restarts.
\end{itemize}
Observe that we have three sets of policies: \{HP1, QP, TP1\}, \{HP2,
QP, TP2\}, and \{revised HP1, QP, revised TP1\}. Notice that within
each set, HPs would degenerate to QP when the time parameter goes
to infinity while degenerate to TPs when the quantity parameter goes
to infinity.

As we have noted earlier, all stochastic clearing systems are subject to service delay due to the inherent clearing practice. More specifically for the case of shipment consolidation practices, all of the above policies are implemented at the expense of customer waiting since a prolonged order holding is needed to accumulate a large load \cite{CETI04,CB03,CL00}. Hence, a key indicator of service performance is indeed $AOD$, which is the average waiting time of orders before delivery \cite{CMW14}. Under all of the   policies described above, the system is cleared once the consolidated load is released, and, hence, the consolidated load under each policy forms a regenerative process. In turn, the policies of interest are renewal-type clearing policies, and the $AOD$ can be obtained by applying the Renewal Reward Theorem, i.e.,
\[
AOD=\frac{\mathrm{\mathbb{E}}[\mbox{Cumulative waiting per consolidation cycle}]}{\mathrm{\mathbb{E}}[\mbox{Number of orders arriving in a consolidation cycle}]}=\frac{\mathrm{\mathbb{E}}[W]}{\lambda\mathrm{\mathbb{E}}[C]},
\]
where $W$ denotes the sum of the waiting times of the orders within
a consolidation cycle, and $C$ denotes the consolidation cycle length.
We index $AOD,W$, and $C$ by policy type as needed. Our specific objective here is to provide a comparative analysis of $AOD$ under the  alternative policies \{HP1, QP, TP1\}, \{HP2, QP, TP2\}, and \{revised HP1, QP, revised TP1\}. 

In \cite{CMW14}, the expectation of cumulative waiting per consolidation cycle under each policy is calculated individually, and the expressions of $AOD$ under
HP1 and HP2 are rather complex/involved limiting the ability to offer a complete analytical comparison of the policies at hand. Here, by characterizing each renewal-type clearing policy as a stopping time, we are able to provide a unified method to calculate $AOD$ from a martingale point of view with
the aid of optional stopping theorem. Further, we are able to derive the explicit expressions of $AOD$ under HP1 and HP2, which include terms involving first and second moments of truncated Poisson random variables whose refined properties, to the best of our knowledge, have not been examined previously. More specifically, throughout this work, for a non-negative integer-valued
random variable $X$, and a positive integer $q$, we denote
\[
X_{q}\triangleq\min(X,q)
\]
as the truncated random variable of interest. 
To provide the comparative results in terms of $AOD$ under alternative
policies, we develop refined properties of these truncated random variables. 
The resulting properties, in turn, are essential for the $AOD$ comparisons among
alternative policies. Specifically, we obtain the
following interesting results of the preservation property, which
have their own merits:
\begin{itemize}
\item Given a non-negative integer-valued random variable $Y$ with $\mathbb{VAR}[Y]\leq\mathrm{\mathbb{E}}[Y]<\infty$,
for any positive integer $N$, we have $\mathbb{VAR}[Y_{N}]<\mathrm{\mathbb{E}}[Y_{N}]$ (see Lemma \ref{VAR<E}).
This, in turn, implies that the relationship of $\mathbb{VAR}[Y]\leq\mathrm{\mathbb{E}}[Y]$
is preserved after $Y$ is truncated.
\item Let $X,Y$ be two non-negative integer valued random variables, and
$X$ is stochastically larger than $Y$. If $\mathrm{\mathbb{E}}[X_{q}]\leq\mathrm{\mathbb{E}}[Y_{q+1}]$,
where $q$ is a positive integer, then $\mathrm{\mathbb{E}}[X_{q}^{2}]\leq\mathrm{\mathbb{E}}[Y_{q+1}^{2}]$ (see Lemma \ref{E2<E2}).
This property implies that the relationship between first moments of two random
variables is preserved in second moment setting.
\item Suppose $X\sim\text{Poisson}(\lambda)$ and $N$ is a positive integer,
then $\mathrm{\mathbb{E}}[X_{N}^{2}]/\mathrm{\mathbb{E}}[X_{N}]$
is increasing with respect to $\lambda$ (see Lemma \ref{ratiobetween2and1moment}). 
We know $\mathrm{\mathbb{E}}[X^{2}]/\mathrm{\mathbb{E}}[X]=\lambda+1$,
which is increasing with respect to $\lambda$. That is, after $X$ is truncated, the property is still preserved.
\end{itemize}

Based on the above refined properties, we provide the following comparative, insightful results among alternative policies in terms of $AOD$: For a fixed expected consolidation cycle length:
\begin{itemize}
\item QP outperforms all renewal-type clearing policies in terms of
$AOD$ (see Theorem \ref{Qbest}).
\item The general class of HPs performs better than the general class of
counterpart TPs in terms of $AOD$ (see Theorems \ref{HPbetterthanTP} and \ref{revised}. Three sets: \{HP1, TP1\}, \{HP2,
TP2\}, and \{revised HP1, revised TP1\}).
\item The HP1 with larger quantity parameter would achieve larger $AOD$ than
the HP1 with smaller quantity parameter, and the same property holds
for HP2 and revised HP1 (see Theorems \ref{HPvsHPundersameEC} and \ref{revised}).
\end{itemize}
Furthermore, with fixed parameters, we show that
\begin{itemize}
\item The general class of HPs has less $AOD$ than the general classes of
counterpart QPs and TPs (see Theorems \ref{HP1bestfixed} and \ref{HP2bestfixed}. Three sets: \{HP1, QP, TP1\}, \{HP2, QP,
TP2\}, and \{revised HP1, QP, revised TP1\}).
\item HP1 has the same $AOD$ as revised HP1, and less $AOD$ than HP2 (see Theorem \ref{HP1betterthanHP2fixed} and Appendix \ref{sec:Append}).
\end{itemize}

Next, we proceed with the details of the new analytical approach for examining $AOD$ which has provided the foundation of the insightful results summarized above. 

\section{Average Order Delay ($AOD$)\label{sec:Average-Order-Delay}}

In this section, based on a martingale associated with the Poisson
input process, we provide a unified method to calculate the $AOD$
for any renewal-type clearing policy. The following lemma reveals
this martingale.
\begin{lem}
\label{martingale} Let $N(t)$ is a Poisson process with rate $\lambda$,
and define $W(t)\triangleq\int_{0}^{t}N(u)du$. Then,
\[
\left\{ W(t)-\frac{1}{2\lambda}N^{2}(t)+\frac{1}{2\lambda}N(t)\right\} _{t\geq0}
\]
is a martingale with respect to the natural filtration $\{\mathcal{G}_{t}\}$, which
is the $\sigma$ field generated by the family of random variables
$\left\{ N(s),s\in[0,t]\right\} $.
\end{lem}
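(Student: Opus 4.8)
The plan is to verify the martingale identity directly from the definition, exploiting the independence and stationarity of the increments of $N$ together with Fubini's theorem for the integral term. Write $M(t):=W(t)-\frac{1}{2\lambda}N^{2}(t)+\frac{1}{2\lambda}N(t)$. I would first dispose of the two routine preliminaries, adaptedness and integrability. Since the sample paths of $N$ are nondecreasing and right-continuous, $W(t)=\int_{0}^{t}N(u)\,du$ is well defined pathwise and $\mathcal{G}_{t}$-measurable, so $M(t)$ is adapted to $\{\mathcal{G}_{t}\}$. Integrability follows from $0\le W(t)\le tN(t)$, which gives $\mathbb{E}[W(t)]\le\lambda t^{2}<\infty$, together with $\mathbb{E}[N^{2}(t)]=\lambda t+\lambda^{2}t^{2}<\infty$; hence $\mathbb{E}[\,|M(t)|\,]<\infty$ for every $t$.

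Next, fix $0\le s<t$ and set $\Delta:=N(t)-N(s)$, which is $\mathrm{Poisson}(\lambda(t-s))$-distributed and independent of $\mathcal{G}_{s}$. I would compute $\mathbb{E}[M(t)-M(s)\mid\mathcal{G}_{s}]$ term by term. For the integral term, decompose $\int_{s}^{t}N(u)\,du=N(s)(t-s)+\int_{s}^{t}\bigl(N(u)-N(s)\bigr)\,du$; since the integrand is nonnegative and jointly measurable, the conditional Fubini--Tonelli theorem together with stationarity of increments yields
\[
\mathbb{E}\Bigl[\int_{s}^{t}\bigl(N(u)-N(s)\bigr)\,du\;\Big|\;\mathcal{G}_{s}\Bigr]=\int_{s}^{t}\mathbb{E}[N(u)-N(s)]\,du=\int_{s}^{t}\lambda(u-s)\,du=\frac{\lambda}{2}(t-s)^{2}.
\]
For the quadratic term, expand $N^{2}(t)=\bigl(N(s)+\Delta\bigr)^{2}=N^{2}(s)+2N(s)\Delta+\Delta^{2}$ and use $\mathbb{E}[\Delta]=\lambda(t-s)$ and $\mathbb{E}[\Delta^{2}]=\lambda(t-s)+\lambda^{2}(t-s)^{2}$ to obtain $\mathbb{E}[N^{2}(t)-N^{2}(s)\mid\mathcal{G}_{s}]=2\lambda(t-s)N(s)+\lambda(t-s)+\lambda^{2}(t-s)^{2}$. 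Finally, $\mathbb{E}[N(t)-N(s)\mid\mathcal{G}_{s}]=\lambda(t-s)$.

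Substituting these three identities into
\[
\mathbb{E}[M(t)-M(s)\mid\mathcal{G}_{s}]=\mathbb{E}[W(t)-W(s)\mid\mathcal{G}_{s}]-\frac{1}{2\lambda}\mathbb{E}[N^{2}(t)-N^{2}(s)\mid\mathcal{G}_{s}]+\frac{1}{2\lambda}\mathbb{E}[N(t)-N(s)\mid\mathcal{G}_{s}]
\]
shows that the terms $N(s)(t-s)$ cancel, the terms $\frac{\lambda}{2}(t-s)^{2}$ cancel, and the two terms $\frac{1}{2}(t-s)$ cancel, leaving $\mathbb{E}[M(t)-M(s)\mid\mathcal{G}_{s}]=0$. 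Combined with adaptedness and integrability, this is precisely the martingale property $\mathbb{E}[M(t)\mid\mathcal{G}_{s}]=M(s)$.

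I expect the only genuine obstacle to be the integral term: one must justify interchanging the conditional expectation with the Lebesgue integral over $[s,t]$ (conditional Fubini--Tonelli, legitimate because $N(u)-N(s)\ge0$ and $(u,\omega)\mapsto N(u,\omega)-N(s,\omega)$ is product-measurable) and then evaluate $\mathbb{E}[N(u)-N(s)]=\lambda(u-s)$ from stationarity; everything else reduces to the elementary first- and second-moment formulas for Poisson random variables. (An alternative derivation assembles $M(t)$ from the classical martingales $N(t)-\lambda t$ and $(N(t)-\lambda t)^{2}-\lambda t$ via the integration-by-parts identity $W(t)=tN(t)-\int_{0}^{t}u\,dN(u)$, but the direct increment computation above is shorter and self-contained.)
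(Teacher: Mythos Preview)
Your proof is correct and follows essentially the same route as the paper: fix $s<t$, use stationary independent increments of $N$ to compute the conditional expectations of $\int_{s}^{t}N(u)\,du$, $N^{2}(t)$, and $N(t)$ given $\mathcal{G}_{s}$, and verify that the combination yields $M(s)$. The only differences are cosmetic---you work with $\mathbb{E}[M(t)-M(s)\mid\mathcal{G}_{s}]$ rather than $\mathbb{E}[M(t)\mid\mathcal{G}_{s}]$, and you explicitly check adaptedness, integrability, and the Fubini step, which the paper leaves implicit.
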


\begin{proof}
Since the Poisson process is with stationary independent increment,
for $s<t$, we have,
\begin{eqnarray*}
\mathrm{\mathbb{E}}\Bigl[\int_{0}^{t}N(u)du\mid\mathcal{G}_{s}\Bigr] & = & \int_{0}^{s}N(u)du+\mathrm{\mathbb{E}}\Bigl[\int_{s}^{t}N(u)du\mid\mathcal{G}_{s}\Bigr]\\
 & = & \int_{0}^{s}N(u)du+(t-s)N(s)+\mathrm{\mathbb{E}}\Bigl[\int_{0}^{t-s}N(u)du\Bigr]\\
 & = & \int_{0}^{s}N(u)du+(t-s)N(s)+\frac{1}{2}\lambda(t-s)^{2},
\end{eqnarray*}
\[
\frac{1}{2\lambda}\mathrm{\mathbb{E}}[N^{2}(t)\mid\mathcal{G}_{s}]=\frac{1}{2\lambda}\Bigl(N^{2}(s)+2\lambda(t-s)N(s)+\lambda(t-s)+\lambda^{2}(t-s)^{2}\Bigr),
\]
and
\[
\frac{1}{2\lambda}\mathrm{\mathbb{E}}[N(t)\mid\mathcal{G}_{s}]=\frac{1}{2\lambda}(N(s)+\lambda(t-s)).
\]
Therefore,
\[
\mathrm{\mathbb{E}}\Bigl[\int_{0}^{t}N(u)du-\frac{1}{2\lambda}N^{2}(t)+\frac{1}{2\lambda}N(t)\mid\mathcal{G}_{s}\Bigr]=\int_{0}^{s}N(u)du-\frac{1}{2\lambda}N^{2}(s)+\frac{1}{2\lambda}N(s),
\]
which shows that $W(t)-\frac{1}{2\lambda}N^{2}(t)+\frac{1}{2\lambda}N(t)$
is a martingale.
\end{proof}
The shipment-release time is always a stopping time with respect to the
stochastic customer order process. For a consolidation policy with shipment-release
time $\tau$, the cumulative waiting time within one consolidation
cycle is
\[
W(\tau)=\int_{0}^{\tau}N(u)du.
\]

Throughout this work, for two real numbers $a$ and $b$, denote $a\wedge b\triangleq\min(a,b)$.
From Lemma \ref{martingale} and the martingale stopping theorem \cite[Theorem 6.2.2]{Ross96},
we have that for any stopping time $\tau$ and any fixed $t>0$,
\begin{eqnarray}
\mathrm{\mathbb{E}}[W(\tau\wedge t)]=\frac{1}{2\lambda}\mathrm{\mathbb{E}}[N^{2}(\tau\wedge t)-N(\tau\wedge t)].\label{MST_W}
\end{eqnarray}
Also, apply again the martingale stopping theorem on another martingale
$\left\{ N(t)-\lambda t\right\} _{t\geq0}$, we have that for any
stopping time $\tau$ and any fixed $t>0$,
\begin{eqnarray}
\mathrm{\mathbb{E}}[N(\tau\wedge t)]=\lambda\mathrm{\mathbb{E}}[\tau\wedge t].\label{MST_N}
\end{eqnarray}
We consider a renewal-type clearing policy with finite cycle mean,
i.e., $\tau$ is of finite mean. From the monotone convergence theorem
\cite[Theorem 2.3.4]{athreya2006measure},
\begin{eqnarray*}
\lim_{t\rightarrow\infty}\mathrm{\mathbb{E}}[W(\tau\wedge t)]=\mathrm{\mathbb{E}}[W(\tau)],
\end{eqnarray*}
and
\begin{eqnarray*}
\lim_{t\rightarrow\infty}\mathrm{\mathbb{E}}[N(\tau\wedge t)(N(\tau\wedge t)-1)]=\mathrm{\mathbb{E}}[N(\tau)(N(\tau)-1)].
\end{eqnarray*}
Noticing Eq. (\ref{MST_W}), we obtain that the expectation of cumulative
waiting time within one consolidation cycle is
\begin{eqnarray}
\mathrm{\mathbb{E}}[W(\tau)]=\frac{1}{2\lambda}\mathrm{\mathbb{E}}[N^{2}(\tau)-N(\tau)].\label{W}
\end{eqnarray}
Similarly, from Eq. (\ref{MST_N}), we have the expectation of consolidation
cycle is
\begin{eqnarray}
\mathrm{\mathbb{E}}[\tau]=\frac{1}{\lambda}\mathrm{\mathbb{E}}[N(\tau)].\label{C}
\end{eqnarray}

From the above discussion, we can deduce $AOD$ for any renewal-type
shipment consolidation policy. Now we calculate the $AOD$ under different
shipment consolidation policies of interest here as follows. For two random
variables $X$ and $Y$, we use $X\sim Y$ to denote that they follow
the same distribution.

1. QP with integer parameter $q\geq1$: $\tau=\tau_{q}$, the time
until the $q$-th order, $q$ is a positive integer; $N(\tau_{q})=q$.
So,
\begin{eqnarray*}
 &  & \mathrm{\mathbb{E}}[W_{QP}]=\mathrm{\mathbb{E}}[W(\tau_{q})]=\frac{1}{2\lambda}q(q-1),\\
 &  & \mathrm{\mathbb{E}}[C_{QP}]=\mathrm{\mathbb{E}}[\tau_{q}]=\frac{q}{\lambda}.
\end{eqnarray*}

2. TP1 with parameter $T$: $\tau=T$, a constant; $N(T)\sim\text{Poisson}(\lambda T)$.
So
\begin{eqnarray*}
 &  & \mathrm{\mathbb{E}}[W_{TP1}]=\mathrm{\mathbb{E}}[W(T)]=\frac{1}{2\lambda}\mathrm{\mathbb{E}}[N^{2}(T)-N(T)]=\frac{1}{2}\lambda T^{2},\\
 &  & \mathrm{\mathbb{E}}[C_{TP1}]=T.
\end{eqnarray*}

3. TP2 with parameter $T$: $\tau=\tau_{1}+T$; $N(\tau_{1}+T)\sim1+N(T)$.
So
\begin{eqnarray*}
 &  & \mathrm{\mathbb{E}}[W_{TP2}]=\mathrm{\mathbb{E}}[W(\tau_{1}+T)]=\frac{1}{2\lambda}\mathrm{\mathbb{E}}[N^{2}(T)+N(T)]=\frac{1}{2}\lambda T^{2}+T,\\
 &  & \mathrm{\mathbb{E}}[C_{TP2}]=\frac{1}{\lambda}+T.
\end{eqnarray*}

4. HP1 with parameters $q$ and $T$: $\tau=\tau_{q}\wedge T$; Let
$Y\sim\text{Poisson}(\lambda T)$, $N\left(\tau_{q}\wedge T\right)\sim N(T)\wedge q\sim Y_{q}$,
which is a truncated Poisson random variable. So
\begin{eqnarray*}
 &  & \mathrm{\mathbb{E}}[W_{HP1}]=\mathrm{\mathbb{E}}\left[W\left(\tau_{q}\wedge T\right)\right]=\frac{1}{2\lambda}\mathrm{\mathbb{E}}[Y_{q}(Y_{q}-1)],\\
 &  & \mathrm{\mathbb{E}}[C_{HP1}]=\frac{1}{\lambda}\mathrm{\mathbb{E}}[Y_{q}].
\end{eqnarray*}

5. HP2 with parameters $q$ and $T$: $\tau=\tau_{q}\wedge(\tau_{1}+T)$;
Let $Y\sim\text{Poisson}(\lambda T)$, $N\left(\tau_{q}\wedge(\tau_{1}+T)\right)\sim(1+N(T))\wedge q\sim Y_{q-1}+1$.
So,
\begin{eqnarray*}
 &  & \mathrm{\mathbb{E}}[W_{HP2}]=\mathrm{\mathbb{E}}\left[W\left(\tau_{q}\wedge(\tau_{1}+T)\right)\right]=\frac{1}{2\lambda}\mathrm{\mathbb{E}}[Y_{q-1}(Y_{q-1}+1)],\\
 &  & \mathrm{\mathbb{E}}[C_{HP2}]=\frac{1}{\lambda}\mathrm{\mathbb{E}}[Y_{q-1}+1].
\end{eqnarray*}

In Table \ref{t1}, we summarize the $AOD$ for different shipment consolidation
policies. We would notice that the expressions of $AOD_{HP1}$ and
$AOD_{HP2}$ involve the truncated Poisson random variables, which
are much simplified than the expressions in \cite{CMW14}. Note that
under TP1 and HP1, the consolidation cycle clock starts over, even
if no order arrives within the previous cycle. We consider the correspondingly
revised policies in the Appendix, which do not allow empty dispatches.

\begin{table}[h]
\centering{}\hspace*{-0.5in} %
\begin{tabular}{llll}
\hline
\vspace{-0.1in}
 &  &  & \tabularnewline
$AOD_{\tau}$  & $=$  & {\Large{}$\;\frac{\mathrm{\mathbb{E}}\left[W(\tau)\right]}{\lambda\mathrm{\mathbb{E}}\left[C_{\tau}\right]}=\frac{\mathrm{\mathbb{E}}[N^{2}(\tau)-N(\tau)]/(2\lambda)}{\mathrm{\mathbb{E}}[N(\tau)]}$}  & \vspace{0.05in}
\tabularnewline
$AOD_{QP}$  & $=$  & {\Large{}$\;\frac{\mathrm{\mathbb{E}}\left[W_{QP}\right]}{\lambda\mathrm{\mathbb{E}}\left[C_{QP}\right]}=\frac{(q-1)q/2\lambda}{q}=\frac{q-1}{2\lambda}$}  & \vspace{0.05in}
\tabularnewline
$AOD_{TP1}$  & $=$  & {\Large{}$\;\frac{\mathrm{\mathbb{E}}\left[W_{TP1}\right]}{\lambda\mathrm{\mathbb{E}}\left[C_{TP1}\right]}=\frac{\lambda T^{2}/2}{\lambda T}=\frac{1}{2}T$}  & \vspace{0.05in}
\tabularnewline
$AOD_{TP2}$  & $=$  & {\Large{}$\;\frac{\mathrm{\mathbb{E}}\left[W_{TP2}\right]}{\lambda\mathrm{\mathbb{E}}\left[C_{TP2}\right]}=\frac{T+\lambda T^{2}/2}{1+\lambda T}$}  & \vspace{0.05in}
\tabularnewline
$AOD_{HP1}$  & $=$  & {\Large{}$\;\frac{\mathrm{\mathbb{E}}\left[W_{HP1}\right]}{\lambda\mathrm{\mathbb{E}}\left[C_{HP1}\right]}=\frac{\mathrm{\mathbb{E}}[Y_{q}(Y_{q}-1)]/(2\lambda)}{\mathrm{\mathbb{E}}[Y_{q}]}$}  & \vspace{0.1in}
\tabularnewline
$AOD_{HP2}$  & $=$  & {\Large{}$\;\frac{\mathrm{\mathbb{E}}\left[W_{HP2}\right]}{\lambda\mathrm{\mathbb{E}}\left[C_{HP2}\right]}=\frac{\mathrm{\mathbb{E}}[Y_{q-1}(Y_{q-1}+1)]/(2\lambda)}{\mathrm{\mathbb{E}}[Y_{q-1}+1]}$}  & \vspace{0.1in}
\tabularnewline
\hline
\end{tabular}\caption{Summary of the expressions of $AOD$, where $Y\sim\text{Poisson}(\lambda T)$.}
\label{t1}
\end{table}

\section{Properties of Truncated Random Variables\label{sec:Properties-on-Truncated}}

In this section, we investigate the properties of truncated random
variables, which are connected to the comparison of different shipment consolidation policies of interest here in terms of $AOD$.
\begin{lem}
\label{VAR-VAR}Given an integer valued random variable $Y$, and
a positive integer $M$, we have
\[
\mathbb{VAR}[Y]-\mathbb{VAR}[Y_{M}]=\mathbb{VAR}[Y-Y_{M}]+2(M-\mathrm{\mathbb{E}}[Y_{M}])(\mathrm{\mathbb{E}}[Y]-\mathrm{\mathbb{E}}[Y_{M}]).
\]
In particular, $\mathbb{VAR}[Y_{M}]<\mathbb{VAR}[Y]$.
\end{lem}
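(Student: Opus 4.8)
The plan is to split $Y$ into its truncated and excess parts, $Y = Y_{M} + (Y - Y_{M})$, and expand the variance. Set $Z \triangleq Y - Y_{M}$; since $Y_{M} = \min(Y,M)$ we have $Z = (Y-M)^{+} \geq 0$. The one genuinely useful observation is that $Y_{M}$ and $Z$ never ``overlap'': on the event $\{Z>0\}=\{Y>M\}$ the truncation is active, so $Y_{M}=M$, whereas on $\{Z=0\}$ the product $Y_{M}Z$ is already zero. Hence $Y_{M}Z = MZ$ almost surely, and therefore $\mathrm{\mathbb{E}}[Y_{M}Z] = M\,\mathrm{\mathbb{E}}[Z]$. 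This is exactly what makes the cross term in the variance expansion explicit; everything else is bookkeeping.

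Carrying this out, I would first record $\mathrm{\mathbb{E}}[Z] = \mathrm{\mathbb{E}}[Y] - \mathrm{\mathbb{E}}[Y_{M}]$ and
\[
\mathrm{Cov}(Y_{M},Z) = \mathrm{\mathbb{E}}[Y_{M}Z] - \mathrm{\mathbb{E}}[Y_{M}]\,\mathrm{\mathbb{E}}[Z] = \bigl(M-\mathrm{\mathbb{E}}[Y_{M}]\bigr)\mathrm{\mathbb{E}}[Z] = \bigl(M-\mathrm{\mathbb{E}}[Y_{M}]\bigr)\bigl(\mathrm{\mathbb{E}}[Y]-\mathrm{\mathbb{E}}[Y_{M}]\bigr).
\]
Then $\mathbb{VAR}[Y] = \mathbb{VAR}[Y_{M}+Z] = \mathbb{VAR}[Y_{M}] + \mathbb{VAR}[Z] + 2\,\mathrm{Cov}(Y_{M},Z)$, and substituting the line above and rearranging gives precisely the stated identity. (Equivalently, one can simply expand $\mathrm{\mathbb{E}}[Y^{2}] = \mathrm{\mathbb{E}}[Y_{M}^{2}] + 2M\mathrm{\mathbb{E}}[Z] + \mathrm{\mathbb{E}}[Z^{2}]$ and $(\mathrm{\mathbb{E}}[Y])^{2} = (\mathrm{\mathbb{E}}[Y_{M}])^{2} + 2\mathrm{\mathbb{E}}[Y_{M}]\mathrm{\mathbb{E}}[Z] + (\mathrm{\mathbb{E}}[Z])^{2}$ using $Y_{M}Z = MZ$, and subtract.) If $Y$ has infinite variance both sides are $+\infty$, so there is no loss in assuming $\mathrm{\mathbb{E}}[Y^{2}]<\infty$, under which all of these manipulations are legitimate.

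For the ``in particular'' claim, the identity already displays $\mathbb{VAR}[Y]-\mathbb{VAR}[Y_{M}]$ as a sum of two non-negative quantities: $\mathbb{VAR}[Y-Y_{M}]\geq 0$, and $(M-\mathrm{\mathbb{E}}[Y_{M}])(\mathrm{\mathbb{E}}[Y]-\mathrm{\mathbb{E}}[Y_{M}])\geq 0$ because $Y_{M}\leq M$ and $Y_{M}\leq Y$. So $\mathbb{VAR}[Y_{M}]\leq\mathbb{VAR}[Y]$ holds with no hypotheses at all. The only subtlety — and the single place the argument needs a second look — is strictness: equality would force both summands to vanish, and a short case split ($P(Y>M)=0$ versus $P(Y>M)>0$) shows this happens only when the truncation does nothing ($Y\leq M$ a.s.) or when $Y$ is almost surely constant; ruling out those degenerate cases gives the strict inequality, and in particular it is strict for the non-degenerate truncated Poisson variables $Y_{q}$ that appear in the $AOD$ formulas of Section~\ref{sec:Average-Order-Delay}. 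I do not anticipate any real obstacle: the whole lemma reduces to spotting $Y_{M}(Y-Y_{M})=M(Y-Y_{M})$, the rest being a two-line variance computation plus care about that degeneracy.
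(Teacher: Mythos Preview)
Your argument is essentially identical to the paper's: both hinge on the observation $Y_{M}(Y-Y_{M})=M(Y-Y_{M})$, compute the covariance from it, and then expand $\mathbb{VAR}[Y_{M}+(Y-Y_{M})]$. Your treatment of the strict inequality is in fact more careful than the paper's, which simply asserts it without isolating the degenerate cases where equality holds.
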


\begin{proof}
First, notice

\[
Y-Y_{M}=(Y-M)1_{Y\geq M+1},
\]
and

\[
\mathrm{\mathbb{E}}[Y_{M}(Y-Y_{M})]=M\mathrm{\mathbb{E}}[(Y-M)1_{Y\geq M+1}]=M(\mathrm{\mathbb{E}}[Y]-\mathrm{\mathbb{E}}[Y_{M}]).
\]
Therefore,
\begin{eqnarray*}
\mathbb{COV}(Y_{M},Y-Y_{M})= &  & \mathrm{\mathbb{E}}[Y_{M}(Y-Y_{M})]-\mathrm{\mathbb{E}}[Y_{M}]\mathrm{\mathbb{E}}[Y-Y_{M}]\\
= &  & (M-\mathrm{\mathbb{E}}[Y_{M}])(\mathrm{\mathbb{E}}[Y]-\mathrm{\mathbb{E}}[Y_{M}]).
\end{eqnarray*}
We have
\begin{eqnarray*}
\mathbb{VAR}[Y]= &  & \mathbb{VAR}[Y_{M}+(Y-Y_{M})]\\
= &  & \mathbb{VAR}[Y_{M}]+\mathbb{VAR}[Y-Y_{M}]+2\mathbb{COV}(Y_{M},Y-Y_{M})\\
= &  & \mathbb{VAR}[Y_{M}]+\mathbb{VAR}[Y-Y_{M}]+2(M-\mathrm{\mathbb{E}}[Y_{M}])(\mathrm{\mathbb{E}}[Y]-\mathrm{\mathbb{E}}[Y_{M}]).
\end{eqnarray*}
\end{proof}
Based on Lemma \ref{VAR-VAR}, we establish the following result,
which will be useful in the comparison between the general class of
HPs and the general class of counterpart TPs in terms of $AOD$.
\begin{lem}
\label{VAR<E}Given an nonnegative integer-valued random variable
$Y$ with $\mathbb{VAR}[Y]\leq\mathrm{\mathbb{E}}[Y]<\infty$, then,
for any positive integer $N$, we have $\mathbb{VAR}[Y_{N}]<\mathrm{\mathbb{E}}[Y_{N}]$.
In particular, $\mathbb{VAR}[Y_{N}]<\mathrm{\mathbb{E}}[Y_{N}]$,
if $Y$ is a Poisson random variable.
\end{lem}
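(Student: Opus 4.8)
The plan is to push the exact variance decomposition of Lemma~\ref{VAR-VAR} (with $M=N$) through the hypothesis $\mathbb{VAR}[Y]\le\mathbb{E}[Y]$. Set $a:=\mathbb{E}[(Y-N)^{+}]=\mathbb{E}[Y]-\mathbb{E}[Y_{N}]\ge 0$ and $b:=N-\mathbb{E}[Y_{N}]=\mathbb{E}[(N-Y)^{+}]\ge 0$. Lemma~\ref{VAR-VAR} reads $\mathbb{VAR}[Y]-\mathbb{VAR}[Y_{N}]=\mathbb{VAR}[Y-Y_{N}]+2ba$, so, using $\mathbb{VAR}[Y]\le\mathbb{E}[Y]=\mathbb{E}[Y_{N}]+a$,
\[
\mathbb{VAR}[Y_{N}]\le\mathbb{E}[Y_{N}]+a(1-2b)-\mathbb{VAR}[Y-Y_{N}].
\]
Thus the weak inequality $\mathbb{VAR}[Y_{N}]\le\mathbb{E}[Y_{N}]$ will hold as soon as $a(1-2b)\le 0$, and the claim reduces to handling the size of $b$.

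If $b\ge\tfrac12$, then $1-2b\le 0$ and $a\ge0$, so the display already gives $\mathbb{VAR}[Y_{N}]\le\mathbb{E}[Y_{N}]$; the strict inequality follows unless the three nonnegative remainders $\mathbb{E}[Y]-\mathbb{VAR}[Y]$, $a(2b-1)$ and $\mathbb{VAR}[Y-Y_{N}]$ all vanish, which forces $Y\le N$ almost surely together with $\mathbb{VAR}[Y]=\mathbb{E}[Y]$---a case that does not occur for Poisson $Y$. If instead $b<\tfrac12$, I would abandon the decomposition and bound the variance directly: since $0\le(N-Y)^{+}\le N$,
\[
\mathbb{VAR}[Y_{N}]\le\mathbb{E}\bigl[(Y_{N}-N)^{2}\bigr]=\mathbb{E}\bigl[((N-Y)^{+})^{2}\bigr]\le N\,\mathbb{E}[(N-Y)^{+}]=Nb,
\]
and since $\mathbb{E}[Y_{N}]=N-b$ while $b<\tfrac12\le\tfrac{N}{N+1}$ for every integer $N\ge1$, we get $Nb<N-b=\mathbb{E}[Y_{N}]$; notably this branch never uses the hypothesis. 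Combining the two branches proves $\mathbb{VAR}[Y_{N}]<\mathbb{E}[Y_{N}]$.

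For the Poisson case, $\mathbb{VAR}[Y]=\mathbb{E}[Y]$ is classical, and since $Y\sim\text{Poisson}(\lambda)$ charges every point of $\{0,1,2,\dots\}$ we have $\Pr(Y>N)>0$ (so $a>0$) and $\Pr(Y\le N)>0$; the former makes $a(2b-1)>0$ when $b>\tfrac12$ and the latter, together with $a>0$, makes $\mathbb{VAR}[Y-Y_{N}]>0$ when $b=\tfrac12$, so the exceptional case above is ruled out and the inequality is strict.

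The main obstacle is not the weak inequality---that falls out of Lemma~\ref{VAR-VAR} and the hypothesis almost mechanically---but the strict one: it requires tracking exactly which of the discarded nonnegative terms can be zero, and the argument shows that strictness genuinely hinges on a (very mild) non-degeneracy of $Y$, which is automatic for a Poisson input.
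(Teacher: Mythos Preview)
Your argument is correct and takes a genuinely different route from the paper. The paper defines $f(N)=\mathbb{VAR}[Y_N]-\mathbb{E}[Y_N]$ and, via Lemma~\ref{VAR-VAR}, shows that the increment $f(N+1)-f(N)$ changes sign at most once in $N$; combined with $f(1)<0$ and $\lim_{N\to\infty}f(N)\le 0$, this forces $f(N)<0$ for every $N$. You instead fix $N$ and split on the size of $b=\mathbb{E}[(N-Y)^{+}]$: for $b\ge\tfrac12$ you push Lemma~\ref{VAR-VAR} through the hypothesis directly, while for $b<\tfrac12$ you use the elementary chain $\mathbb{VAR}[Y_N]\le\mathbb{E}[(Y_N-N)^2]=\mathbb{E}[((N-Y)^{+})^{2}]\le Nb<N-b=\mathbb{E}[Y_N]$, which notably does not use the hypothesis $\mathbb{VAR}[Y]\le\mathbb{E}[Y]$ at all. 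The paper's monotonicity argument is more uniform across $N$; your case analysis is more transparent at a fixed $N$ and has the merit of isolating precisely when strictness can fail---namely when $Y\le N$ almost surely together with $\mathbb{VAR}[Y]=\mathbb{E}[Y]$---a borderline case the paper's proof does not single out (and which, as you observe, is impossible for a Poisson $Y$). Your sentence ``Combining the two branches proves $\mathbb{VAR}[Y_{N}]<\mathbb{E}[Y_{N}]$'' slightly overstates the general claim, since branch one yields only $\le$ in that degenerate situation; but you flag this immediately and verify strictness for Poisson input, which is all the downstream applications in the paper require.
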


\begin{proof}
Define $f(N)\triangleq\mathbb{VAR}[Y_{N}]-\mathrm{\mathbb{E}}[Y_{N}]$,
for $N\geq1$. Noticing
\[
Y_{N}=Y_{N+1}\wedge N,\,Y_{N+1}-Y_{N}=1_{Y\geq N+1},
\]
and applying Lemma \ref{VAR-VAR}, we have
\begin{eqnarray*}
 &  & f(N+1)-f(N)\\
= &  & (\mathbb{VAR}[Y_{N+1}]-\mathbb{VAR}[Y_{N}])-(\mathrm{\mathbb{E}}[Y_{N+1}]-\mathrm{\mathbb{E}}[Y_{N}])\\
= &  & \mathbb{VAR}[Y_{N+1}-Y_{N}]+2(N-\mathrm{\mathbb{E}}[Y_{N}])(\mathrm{\mathbb{E}}[Y_{N+1}]-\mathrm{\mathbb{E}}[Y_{N}])-(\mathrm{\mathbb{E}}[Y_{N+1}]-\mathrm{\mathbb{E}}[Y_{N}])\\
= &  & \mathbb{VAR}[Y_{N+1}-Y_{N}]+(2N-2\mathrm{\mathbb{E}}[Y_{N}]-1)(\mathrm{\mathbb{E}}[Y_{N+1}]-\mathrm{\mathbb{E}}[Y_{N}])\\
= &  & \mathbb{P}(Y\geq N+1)\mathbb{P}(Y\leq N)+(2N-2\mathrm{\mathbb{E}}[Y_{N}]-1)\mathbb{P}(Y\geq N+1)\\
= &  & \left(2\mathrm{\mathbb{E}}[\max(N-Y,0)]-\mathbb{P}(Y\geq N+1)\right)\mathbb{P}(Y\geq N+1).
\end{eqnarray*}
Obviously, $2\mathrm{\mathbb{E}}[\max(N-Y,0)]-\mathbb{P}(Y\geq N+1)$
is increasing with respect to $N$, which implies that $f(N+1)-f(N)$
changes sign at most once with respect to $N$: either from negative
to positive or always positive. In the first case, $f(N)$ is first
decreasing and then increasing; in the second case, $f(N)$ is increasing.
Based on the following observations

\[
\lim_{N\rightarrow\infty}f(N)=\lim_{N\rightarrow\infty}(\mathbb{VAR}[Y_{N}]-\mathrm{\mathbb{E}}[Y_{N}])=\mathbb{VAR}[Y]-\mathrm{\mathbb{E}}[Y]\leq0,
\]
and
\[
f(1)=\mathbb{VAR}[1_{Y\geq1}]-\mathrm{\mathbb{E}}[1_{Y\geq1}]=\mathbb{P}(Y\geq1)(1-\mathbb{P}(Y\geq1))-\mathbb{P}(Y\geq1)<0,
\]
we conclude that $f(N)<0$ for all $N$, i.e. $\mathbb{VAR}[Y_{N}]<\mathrm{\mathbb{E}}[Y_{N}]$.
\end{proof}
Next, we provide a result which would be essential in comparing the
same type HP with different parameters in terms of $AOD$, with a fixed
expected consolidation cycle length $\mathrm{\mathbb{E}}[C]$.
\begin{lem}
\label{E2<E2}Assume $X,Y$ are two integer valued random variables,
and $X$ is stochastically larger than $Y$. If $\mathrm{\mathbb{E}}[X_{q}]\leq\mathrm{\mathbb{E}}[Y_{q+1}]$,
where $q$ is a positive integer, then $\mathrm{\mathbb{E}}[X_{q}^{2}]\leq\mathrm{\mathbb{E}}[Y_{q+1}^{2}]$.
\end{lem}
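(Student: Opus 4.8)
The plan is to push everything down to tail probabilities. Assuming, as in all the applications, that $X$ and $Y$ are non-negative integer-valued, write $a_k=\mathbb{P}(X\ge k)$ and $b_k=\mathbb{P}(Y\ge k)$ for $k\ge 1$. Starting from the telescoping identity $\min(Z,m)=\sum_{k=1}^{m}1_{Z\ge k}$ one gets $\mathbb{E}[X_q]=\sum_{k=1}^{q}a_k$ and $\mathbb{E}[Y_{q+1}]=\sum_{k=1}^{q+1}b_k$; squaring this identity and counting, for each $j$, the $2j-1$ pairs $(k,\ell)$ with $\max(k,\ell)=j$ yields the second-moment representations $\mathbb{E}[X_q^2]=\sum_{k=1}^{q}(2k-1)a_k$ and $\mathbb{E}[Y_{q+1}^2]=\sum_{k=1}^{q+1}(2k-1)b_k$.

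Next I would rewrite both hypotheses in these terms. Stochastic dominance of $X$ over $Y$ says precisely $c_k:=a_k-b_k\ge 0$ for every $k\ge 1$, and the assumption $\mathbb{E}[X_q]\le\mathbb{E}[Y_{q+1}]$ then reads $\sum_{k=1}^{q}c_k\le b_{q+1}$. Likewise, isolating the $k=q+1$ term in $\mathbb{E}[Y_{q+1}^2]$, the desired conclusion $\mathbb{E}[X_q^2]\le\mathbb{E}[Y_{q+1}^2]$ is equivalent to
\[
\sum_{k=1}^{q}(2k-1)\,c_k \;\le\; (2q+1)\,b_{q+1}.
\]

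The last step is then a crude but sufficient bound: since $c_k\ge0$ and $2k-1\le 2q-1$ for $1\le k\le q$,
\[
\sum_{k=1}^{q}(2k-1)\,c_k \;\le\; (2q-1)\sum_{k=1}^{q}c_k \;\le\; (2q-1)\,b_{q+1} \;\le\; (2q+1)\,b_{q+1},
\]
which is exactly the inequality we need, so we are done. I do not expect a real obstacle here; the only point requiring a little care is to use the first-moment hypothesis in the correct way --- it controls the \emph{sum} $\sum_{k=1}^{q}c_k$ rather than the individual $c_k$ --- and to notice that the extra tail term $b_{q+1}$ produced by truncating at $q+1$ instead of $q$ carries weight $2q+1$, which strictly exceeds the largest weight $2q-1$ among the terms $(2k-1)c_k$. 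That built-in slack is what lets the naive estimate close, so no finer structural information about $X$ or $Y$ is needed.
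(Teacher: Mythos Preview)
Your argument is correct. Reducing to the tail sums $a_k=\mathbb{P}(X\ge k)$, $b_k=\mathbb{P}(Y\ge k)$ and using $\mathbb{E}[Z_m]=\sum_{k=1}^{m}\mathbb{P}(Z\ge k)$ and $\mathbb{E}[Z_m^2]=\sum_{k=1}^{m}(2k-1)\mathbb{P}(Z\ge k)$ is clean, and your final chain of inequalities is valid: $c_k\ge0$ by stochastic dominance, $\sum_{k\le q}c_k\le b_{q+1}$ is exactly the first-moment hypothesis, and the weight bound $2k-1\le 2q-1<2q+1$ closes the estimate. Your explicit caveat that $X,Y$ are taken non-negative matches the convention used throughout the paper.

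The paper's proof reaches the same conclusion by a genuinely different mechanism. It first observes the pointwise identity $Y_{q+1}^2-Y_q^2=(2q+1)(Y_{q+1}-Y_q)$, combines it with the hypothesis to obtain
\[
\mathbb{E}[Y_{q+1}^2]-\mathbb{E}[X_q^2]\;\ge\;\mathbb{E}\bigl[(X_q-Y_q)(2q+1-X_q-Y_q)\bigr],
\]
and then invokes a coupling: since $X_q$ stochastically dominates $Y_q$, one may realize them as $X'\ge Y'$ almost surely, after which the integrand is non-negative because $X'+Y'\le 2q$. Your approach trades this coupling step for the elementary weighted-sum bound; it is shorter, needs no auxiliary probability-space construction, and even delivers a bit of slack (you land on $(2q-1)b_{q+1}$ rather than $(2q+1)b_{q+1}$). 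The paper's route, on the other hand, packages the comparison in a single pointwise inequality, which some readers may find more transparent as to \emph{why} truncation at $q$ versus $q+1$ produces exactly the right margin.
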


\begin{proof}
From
\[
Y_{q+1}^{2}-Y_{q}^{2}=(2q+1)(Y_{q+1}-Y_{q}),
\]
we have
\begin{eqnarray*}
\mathrm{\mathbb{E}}[Y_{q+1}^{2}]-\mathrm{\mathbb{E}}[Y_{q}^{2}]=(2q+1)(\mathrm{\mathbb{E}}[Y_{q+1}]-\mathrm{\mathbb{E}}[Y_{q}])\geq(2q+1)(\mathrm{\mathbb{E}}[X_{q}]-\mathrm{\mathbb{E}}[Y_{q}]).
\end{eqnarray*}
Therefore,
\begin{eqnarray}
\mathrm{\mathbb{E}}[Y_{q+1}^{2}]-\mathrm{\mathbb{E}}[X_{q}^{2}]\geq &  & \mathrm{\mathbb{E}}[Y_{q}^{2}]-\mathrm{\mathbb{E}}[X_{q}^{2}]+(2q+1)(\mathrm{\mathbb{E}}[X_{q}]-\mathrm{\mathbb{E}}[Y_{q}])\nonumber\\
= &  & \mathrm{\mathbb{E}}[(X_{q}-Y_{q})(2q+1-X_{q}-Y_{q})].\label{OK}
\end{eqnarray}
Since $X$ is stochastically
larger than $Y$, $X_{q}$ is also stochastically larger than $Y_{q}$.
From \cite[Proposition 9.2.2]{Ross96}, we always can find two random
variables $X'$ and $Y'$, such that $X'$ has the same probability
distribution as $X_{q}$, $Y'$ has the same probability distribution
as $Y_{q}$, and $X'\geq Y'$ almost surely.

From Eq. (\ref{OK}), and notice $X'\leq q$, $Y'\leq q$ almost surely,
we have
\begin{eqnarray*}
\mathrm{\mathbb{E}}[Y_{q+1}^{2}]-\mathrm{\mathbb{E}}[X_{q}^{2}]\geq\mathrm{\mathbb{E}}[(X'-Y')(2q+1-X'-Y')]\geq0.
\end{eqnarray*}
\end{proof}
The following lemma characterizes how the ratio between the second
moment and the first moment of a truncated Poisson random variable
changes with respect to the Poisson rate parameter, which will be
used when we compare HP1 and HP2 under fixed policy parameters, in
terms of $AOD$.
\begin{lem}
\label{ratiobetween2and1moment}Suppose $X\sim\text{Poisson}(\lambda)$
and $N$ is a positive integer, then $\mathrm{\mathbb{E}}[X_{N}^{2}]/\mathrm{\mathbb{E}}[X_{N}]$
is increasing with respect to $\lambda$.
\end{lem}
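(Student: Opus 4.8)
The plan is to fix $\lambda>0$ and show that $g(\lambda)\triangleq\mathrm{\mathbb{E}}[X_{N}^{2}]/\mathrm{\mathbb{E}}[X_{N}]$ has nonnegative derivative. Write $p_{k}=p_{k}(\lambda)=e^{-\lambda}\lambda^{k}/k!$ and $a_{j}=a_{j}(\lambda)=\mathbb{P}(X\geq j)$. I would start from the tail-sum representations $\mathrm{\mathbb{E}}[X_{N}]=\sum_{j=1}^{N}a_{j}$ and $\mathrm{\mathbb{E}}[X_{N}^{2}]=\sum_{j=1}^{N}(2j-1)a_{j}$, the second obtained by applying $m^{2}=\sum_{j=1}^{m}(2j-1)$ with $m=X_{N}$ together with $X_{N}\leq N$. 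Since each $a_{j}=1-\sum_{k=0}^{j-1}p_{k}$ is a finite sum and $p_{k}'(\lambda)=p_{k-1}(\lambda)-p_{k}(\lambda)$ (with the convention $p_{-1}\equiv 0$), the sum telescopes to $a_{j}'(\lambda)=p_{j-1}$; differentiating these finite sums term by term is therefore unproblematic, and $g$ is smooth on $(0,\infty)$ because $\mathrm{\mathbb{E}}[X_{N}]>0$ there.

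Next I would apply the quotient rule and clear the positive factor $(\sum_{j}a_{j})^{2}$, which reduces $g'(\lambda)\geq 0$ to the correlation inequality
\[
\Bigl(\sum_{j=1}^{N}(2j-1)p_{j-1}\Bigr)\Bigl(\sum_{j=1}^{N}a_{j}\Bigr)\;\geq\;\Bigl(\sum_{j=1}^{N}(2j-1)a_{j}\Bigr)\Bigl(\sum_{j=1}^{N}p_{j-1}\Bigr).
\]
Putting $b_{j}=2j-1$, $u_{j}=p_{j-1}$, $v_{j}=a_{j}$, the difference of the two sides equals $\tfrac12\sum_{i,j}(b_{i}-b_{j})(u_{i}v_{j}-u_{j}v_{i})$, a Chebyshev-type sum. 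Since $b_{j}$ is increasing in $j$ and $u_{i}v_{j}-u_{j}v_{i}=v_{i}v_{j}(u_{i}/v_{i}-u_{j}/v_{j})$ with $v_{i},v_{j}>0$, every summand is nonnegative provided the likelihood ratio $u_{j}/v_{j}=\mathbb{P}(X=j-1)/\mathbb{P}(X\geq j)$ is nondecreasing in $j$.

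That monotonicity is the heart of the argument, and I would establish it directly: for $k\geq j$ one has $p_{k}/p_{j-1}=\lambda^{k-j+1}/(j(j+1)\cdots k)$, hence $a_{j}/p_{j-1}=\sum_{m\geq 0}\lambda^{m+1}/\bigl(j(j+1)\cdots(j+m)\bigr)$, and each term of this series strictly decreases when $j$ is replaced by $j+1$ (its denominator is multiplied by $(j+m+1)/j>1$); therefore $p_{j-1}/a_{j}$ is nondecreasing in $j$, which closes the proof. I expect the main obstacle to be recognizing the reduction to this Chebyshev / monotone-likelihood-ratio structure; once it is in place, the two remaining ingredients — the Poisson tail derivative $a_{j}'=p_{j-1}$ and the termwise comparison in the last series — are entirely routine.
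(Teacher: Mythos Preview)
Your argument is correct. The tail-sum identities, the derivative $a_{j}'=p_{j-1}$, the quotient-rule reduction, and the Chebyshev symmetrization are all sound, and your key step---that $a_{j}/p_{j-1}=\sum_{m\geq 0}\lambda^{m+1}/\bigl(j(j+1)\cdots(j+m)\bigr)$ is strictly decreasing in $j$---is verified termwise exactly as you say.

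The paper takes a genuinely different route. Rather than differentiating in $\lambda$, it fixes $\lambda_{1}<\lambda_{2}$, sets $Y\sim\text{Poisson}(\lambda_{1})$, $Z\sim\text{Poisson}(\lambda_{2})$, and shows directly that $\mathrm{\mathbb{E}}[Z_{N}^{2}]\mathrm{\mathbb{E}}[Y_{N}]-\mathrm{\mathbb{E}}[Y_{N}^{2}]\mathrm{\mathbb{E}}[Z_{N}]>0$ by expanding both second moments via $\mathrm{\mathbb{E}}[W^{2}]=\sum_{j\geq 1}\sum_{m\geq j}m\,\mathbb{P}(W=m)$ and reducing the difference to a sum of terms $mk\bigl(\mathbb{P}(Z_{N}=m)\mathbb{P}(Y_{N}=k)-\mathbb{P}(Y_{N}=m)\mathbb{P}(Z_{N}=k)\bigr)$ with $k<m$, each positive by the likelihood-ratio ordering of the Poisson family in the parameter $\lambda$. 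So the paper's engine is monotone likelihood ratio \emph{in $\lambda$}, whereas yours is a hazard/Mills-ratio monotonicity \emph{in the index $j$} (equivalently, log-concavity of the Poisson pmf). Your derivative-plus-Chebyshev reduction is arguably cleaner and makes the structural reason transparent; the paper's two-parameter comparison avoids calculus and yields strict monotonicity between any two values of $\lambda$ in one stroke. Either monotonicity fact would drive the other proof, so the two arguments are in some sense dual.
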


\begin{proof}
Let $Y\sim\text{Poisson}(\lambda_{1})$, $Z\sim\text{Poisson}(\lambda_{2})$,
where $\lambda_{1}<\lambda_{2}$. When $k<m<N$,
\begin{eqnarray}
\mathbb{P}(Z_{N}=m)\mathbb{P}(Y_{N}=k)-\mathbb{P}(Y_{N}=m)\mathbb{P}(Z_{N}=k)=\frac{e^{-\lambda_{1}-\lambda_{2}}}{m!k!}(\lambda_{2}^{m}\lambda_{1}^{k}-\lambda_{1}^{m}\lambda_{2}^{k})>0,\label{mk}
\end{eqnarray}
and when $k<N$,
\begin{eqnarray}
 &  & \mathbb{P}(Z_{N}=N)\mathbb{P}(Y_{N}=k)-\mathbb{P}(Y_{N}=N)\mathbb{P}(Z_{N}=k)\nonumber \\
= &  & \sum_{j\geq N}(\mathbb{P}(Z=j)\mathbb{P}(Y=k)-\mathbb{P}(Y=j)\mathbb{P}(Z=k))\nonumber \\
= &  & \sum_{j\geq N}\frac{e^{-\lambda_{1}-\lambda_{2}}}{j!k!}(\lambda_{2}^{j}\lambda_{1}^{k}-\lambda_{1}^{j}\lambda_{2}^{k})>0.\label{Nk}
\end{eqnarray}
Note that for any non-negative integer valued random variable $W$,
we have
\begin{eqnarray*}
\mathrm{\mathbb{E}}[W^{2}]=\sum_{m=1}^{\infty}m^{2}\mathbb{P}(W=m)=\sum_{m=1}^{\infty}\sum_{j=1}^{m}m\mathbb{P}(W=m)=\sum_{j=1}^{\infty}\sum_{m=j}^{\infty}m\mathbb{P}(W=m).
\end{eqnarray*}
Therefore, we obtain
\begin{eqnarray*}
 &  & \mathrm{\mathbb{E}}[Z_{N}^{2}]\mathrm{\mathbb{E}}[Y_{N}]-\mathrm{\mathbb{E}}[Y_{N}^{2}]\mathrm{\mathbb{E}}[Z_{N}]\\
= &  & \sum_{j=1}^{N}\sum_{m=j}^{N}m\mathbb{P}(Z_{N}=m)\sum_{k=1}^{N}k\mathbb{P}(Y_{N}=k)-\sum_{j=1}^{N}\sum_{m=j}^{N}m\mathbb{P}(Y_{N}=m)\sum_{k=1}^{N}k\mathbb{P}(Z_{N}=k)\\
= &  & \sum_{j=1}^{N}\sum_{m=j}^{N}\sum_{k=1}^{j-1}mk[\mathbb{P}(Z_{N}=m)\mathbb{P}(Y_{N}=k)-\mathbb{P}(Y_{N}=m)\mathbb{P}(Z_{N}=k)]\\
> &  & 0,
\end{eqnarray*}
where the second equality is derived from
\begin{eqnarray*}
\sum_{m=j}^{N}\sum_{k=j}^{N}mk[\mathbb{P}(Z_{N}=m)\mathbb{P}(Y_{N}=k)-\mathbb{P}(Y_{N}=m)\mathbb{P}(Z_{N}=k)]=0,
\end{eqnarray*}
and the last inequality holds by Eqs. (\ref{mk}) and (\ref{Nk}).

Therefore,
\begin{eqnarray*}
\frac{\mathrm{\mathbb{E}}[Z_{N}^{2}]}{\mathrm{\mathbb{E}}[Z_{N}]}-\frac{\mathrm{\mathbb{E}}[Y_{N}^{2}]}{\mathrm{\mathbb{E}}[Y_{N}]}=\frac{\mathrm{\mathbb{E}}[Z_{N}^{2}]\mathrm{\mathbb{E}}[Y_{N}]-\mathrm{\mathbb{E}}[Y_{N}^{2}]\mathrm{\mathbb{E}}[Z_{N}]}{\mathrm{\mathbb{E}}[Y_{N}]\mathrm{\mathbb{E}}[Z_{N}]}>0,
\end{eqnarray*}
which implies that $\frac{\mathrm{\mathbb{E}}[X_{N}^{2}]}{\mathrm{\mathbb{E}}[X_{N}]}$
is increasing with respect to $\lambda$.
\end{proof}

\section{Comparison of $AOD$ under a Fixed Expected Cycle Length, $\mathrm{\mathbb{E}}[C]$\label{sec:Comparison-of-AOD}}

In \textbf{(O10)} of \cite{CMW14}, through a numerical study, there
is an observation that for a given $\mathrm{\mathbb{E}}[C]$, the
QP performs the best and TPs perform the worst in terms of $AOD$. In
this section, we analytically show that for a given $\mathrm{\mathbb{E}}[C]$,
QP provides superior service compared with any other shipment consolidation
policy in terms of $AOD$, not limited to HPs and TPs. Further, we rigorously
compare HPs and TPs in terms of $AOD$, for a given $\mathrm{\mathbb{E}}[C]$.
In addition, for a given $\mathrm{\mathbb{E}}[C]$, we provide the
comparative result between the same type HP policies with different
parameters, in terms of $AOD$. The readers who are interested in the
managerial motivation of this comparison of $AOD$ are referred to \cite{CMW14}.
\begin{thm}
\label{Qbest}For a fixed expected consolidation cycle length, QP
outperforms all the other renewal-type clearing policies in terms
of $AOD$.
\end{thm}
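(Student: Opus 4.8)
The plan is to lean entirely on the unified representation of $AOD$ obtained in Section~\ref{sec:Average-Order-Delay}, namely
\[
AOD_{\tau}=\frac{\mathrm{\mathbb{E}}[N^{2}(\tau)-N(\tau)]/(2\lambda)}{\mathrm{\mathbb{E}}[N(\tau)]},
\]
together with Eq.~(\ref{C}), $\mathrm{\mathbb{E}}[N(\tau)]=\lambda\mathrm{\mathbb{E}}[C_{\tau}]$. Fixing the expected consolidation cycle length therefore fixes $\mathrm{\mathbb{E}}[N(\tau)]$; write $q\triangleq\lambda\mathrm{\mathbb{E}}[C]$ for this common value, which is a positive integer precisely because the benchmark QP admits only integer parameters and the statement is understood as comparing against the QP with that cycle length. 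The only quantity still free to vary among renewal-type policies sharing this $\mathrm{\mathbb{E}}[C]$ is then the second moment $\mathrm{\mathbb{E}}[N^{2}(\tau)]$.

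First I would rewrite $\mathrm{\mathbb{E}}[N^{2}(\tau)]=\mathbb{VAR}[N(\tau)]+(\mathrm{\mathbb{E}}[N(\tau)])^{2}=\mathbb{VAR}[N(\tau)]+q^{2}$, so that
\[
AOD_{\tau}=\frac{\mathbb{VAR}[N(\tau)]+q^{2}-q}{2\lambda q}=\frac{q-1}{2\lambda}+\frac{\mathbb{VAR}[N(\tau)]}{2\lambda q}.
\]
Since Table~\ref{t1} gives $AOD_{QP}=(q-1)/(2\lambda)$ for the QP with parameter $q$, this identity reads $AOD_{\tau}=AOD_{QP}+\mathbb{VAR}[N(\tau)]/(2\lambda q)\ge AOD_{QP}$, because a variance is nonnegative. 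Equality holds iff $\mathbb{VAR}[N(\tau)]=0$, i.e.\ $N(\tau)$ is almost surely constant (and then necessarily equal to $q$, with $\tau=\tau_{q}$ a.s.), which is exactly QP; any genuinely different renewal-type policy has $N(\tau)$ nondegenerate and hence strictly larger $AOD$.

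There is essentially no hard step: the argument reduces to a one-line variance inequality once the unified $AOD$ formula is available. The only points deserving care are the bookkeeping around integrality and finiteness --- reading ``a fixed expected cycle length'' as a value attainable by QP so that $q\in\{1,2,\dots\}$, and restricting (as already done in Section~\ref{sec:Average-Order-Delay}) to policies with finite cycle mean so that $\mathrm{\mathbb{E}}[N(\tau)]$ is finite and the decomposition of $\mathrm{\mathbb{E}}[N^{2}(\tau)]$ is meaningful; in the degenerate case $\mathrm{\mathbb{E}}[N^{2}(\tau)]=\infty$ the inequality $AOD_{\tau}\ge AOD_{QP}$ is trivial. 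No structural property of the stopping time beyond these is needed.
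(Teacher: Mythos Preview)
Your argument is correct and is essentially the paper's own proof: both fix $\mathrm{\mathbb{E}}[N(\tau)]$ via Eq.~(\ref{C}) and then invoke $\mathrm{\mathbb{E}}[N^{2}(\tau)]\ge (\mathrm{\mathbb{E}}[N(\tau)])^{2}$ with equality iff $N(\tau)$ is constant; you phrase this as $\mathbb{VAR}[N(\tau)]\ge 0$, the paper writes it directly as $\mathrm{\mathbb{E}}[N^{2}(\tau)]/\mathrm{\mathbb{E}}[N(\tau)]\ge \mathrm{\mathbb{E}}[N(\tau)]$. The paper does not dwell on the integrality of $q$ or on the $\mathrm{\mathbb{E}}[N^{2}(\tau)]=\infty$ case, but your handling of those points is fine and does not change the substance.
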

\begin{proof}
From Table \ref{t1}, we know $AOD$ of a shipment consolidation policy
with shipment-release time $\tau$ is
\[
AOD_{\tau}=\frac{\mathrm{\mathbb{E}}[N^{2}(\tau)-N(\tau)]/(2\lambda)}{\mathrm{\mathbb{E}}[N(\tau)]}.
\]
From Eq. (\ref{C}), the fixed $\mathrm{\mathbb{E}}[\tau]$ implies $\mathrm{\mathbb{E}}[N(\tau)]$
is fixed. Furthermore, we have
\begin{eqnarray*}
AOD_{\tau}=\frac{1}{2\lambda}\left(\frac{\mathrm{\mathbb{E}}[N^{2}(\tau)]}{\mathrm{\mathbb{E}}[N(\tau)]}-1\right)\geq\frac{1}{2\lambda}(\mathrm{\mathbb{E}}[N(\tau)]-1),
\end{eqnarray*}
where the equality holds if and only if $N(\tau)$ is a constant,
which implies QP achieves the least $AOD$ with a fixed expected consolidation
cycle length.
\end{proof}
\begin{rem}
\label{remarkforcost}If there is a consolidation policy with shipment-release 
time $\tau$, which has the same expected cycle length as a quantity-based
policy with parameter $q$, that is $\mathrm{\mathbb{E}}[\tau]=\frac{q}{\lambda}$,
the average cost associated with this policy is
\[
\frac{A_{D}+C_{D}[N(\tau)]+\omega\mathrm{\mathbb{E}}[W(\tau)]}{\mathrm{\mathbb{E}}[\tau]},
\]
where $A_{D}$ is the fixed cost for each shipment-release, $C_{D}$ is the
unit transportation cost, and $\omega$ is the waiting cost per unit
per unit time. With fixed $\mathrm{\mathbb{E}}[\tau]$, $\mathrm{\mathbb{E}}[N(\tau)]$
is also fixed. From Theorem \ref{Qbest}, we conclude that the corresponding
quantity-based policy achieves less average cost than this policy
with shipment-release time $\tau$.
\end{rem}

One disadvantage of QP is that it has no upper bound on the cycle
length, in contrast, HP is of more practical importance since by definition
it has an upper bound on the cycle length. In the next result, we show the
general class of HPs outperforms the general class of counterpart
TPs in terms of $AOD$, under a fixed expected consolidation cycle length.
\begin{thm}
\label{HPbetterthanTP}For a fixed expected consolidation cycle length
$\mathrm{\mathbb{E}}[C]$, HP1 performs better than TP1, and HP2 performs
better than TP2 in terms of $AOD$.
\end{thm}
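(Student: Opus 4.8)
The plan is to reduce both claims to the variance inequality of Lemma \ref{VAR<E}. Start from the generic formula in Table \ref{t1},
\[
AOD_{\tau}=\frac{1}{2\lambda}\left(\frac{\mathrm{\mathbb{E}}[N^{2}(\tau)]}{\mathrm{\mathbb{E}}[N(\tau)]}-1\right),
\]
and recall from Eq.\ (\ref{C}) that fixing $\mathrm{\mathbb{E}}[C]$ is the same as fixing $\mu:=\mathrm{\mathbb{E}}[N(\tau)]=\lambda\mathrm{\mathbb{E}}[C]$. Hence, for two renewal-type policies sharing a common $\mathrm{\mathbb{E}}[C]$, comparing $AOD$ is equivalent to comparing $\mathrm{\mathbb{E}}[N^{2}(\tau)]$, equivalently $\mathbb{VAR}[N(\tau)]=\mathrm{\mathbb{E}}[N^{2}(\tau)]-\mu^{2}$.

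For HP1 versus TP1: under the TP1 policy $N(\tau)\sim\text{Poisson}(\lambda T)$ with $\lambda T=\mu$, so $\mathbb{VAR}[N(\tau)]=\mu$. Under an HP1 policy with parameters $(q,T')$, Section \ref{sec:Average-Order-Delay} gives $N(\tau)\sim Y_{q}$ with $Y\sim\text{Poisson}(\lambda T')$ and $\mathrm{\mathbb{E}}[Y_{q}]=\mu$. Since a Poisson variable satisfies $\mathbb{VAR}[Y]=\mathrm{\mathbb{E}}[Y]<\infty$, Lemma \ref{VAR<E} yields $\mathbb{VAR}[Y_{q}]<\mathrm{\mathbb{E}}[Y_{q}]=\mu$. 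As the two policies share the mean $\mu$, this transfers to $\mathrm{\mathbb{E}}[N^{2}(\tau_{HP1})]<\mu+\mu^{2}=\mathrm{\mathbb{E}}[N^{2}(\tau_{TP1})]$, hence $AOD_{HP1}<AOD_{TP1}$.

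For HP2 versus TP2 I would run the same argument after peeling off the deterministic first-order arrival. Under TP2, $N(\tau)\sim 1+\text{Poisson}(\lambda T)$ with $1+\lambda T=\mu$, so $\mathbb{VAR}[N(\tau)]=\lambda T=\mu-1$. Under an HP2 policy with parameters $(q,T')$, $N(\tau)\sim 1+Y_{q-1}$ with $Y\sim\text{Poisson}(\lambda T')$ and $\mathrm{\mathbb{E}}[Y_{q-1}]=\mu-1$; shifting by the constant $1$ leaves the variance unchanged, so $\mathbb{VAR}[N(\tau)]=\mathbb{VAR}[Y_{q-1}]$. For $q\geq2$, Lemma \ref{VAR<E} gives $\mathbb{VAR}[Y_{q-1}]<\mathrm{\mathbb{E}}[Y_{q-1}]=\mu-1$ (the case $q=1$ is degenerate: $Y_{0}\equiv0$ forces $\mu=1$ and both policies reduce to QP with $q=1$). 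Since both policies have mean $\mu$, we again obtain $\mathrm{\mathbb{E}}[N^{2}(\tau_{HP2})]\leq\mathrm{\mathbb{E}}[N^{2}(\tau_{TP2})]$, hence $AOD_{HP2}\leq AOD_{TP2}$, strictly when $q\geq2$.

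All of the arithmetic here is elementary; the genuine content is Lemma \ref{VAR<E}, so the main obstacle is already behind us. The only points requiring care are bookkeeping: choosing the Poisson rate in each TP policy so that the compared policies truly have the same $\mathrm{\mathbb{E}}[N(\tau)]$, correctly isolating the ``$+1$'' in the TP2/HP2 comparison (so that $Y_{q-1}$ is compared against a $\text{Poisson}(\mu-1)$ rather than a $\text{Poisson}(\mu)$), and noting that the inequality on $\mathbb{VAR}[N(\tau)]$ is equivalent to the claimed inequality on $AOD$ precisely because the first moments are held fixed.
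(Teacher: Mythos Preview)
Your proof is correct and follows essentially the same route as the paper: both fix $\mathrm{\mathbb{E}}[C]$ (hence $\mathrm{\mathbb{E}}[N(\tau)]$), reduce the $AOD$ comparison to a second-moment/variance comparison, and invoke Lemma \ref{VAR<E} on the truncated Poisson variable $Y_{q}$ (respectively $Y_{q-1}$) to conclude. The paper works with $\mathrm{\mathbb{E}}[N(\tau)(N(\tau)\pm 1)]$ and decomposes it as $\mathbb{VAR}+\mathrm{\mathbb{E}}^{2}\pm\mathrm{\mathbb{E}}$, whereas you compare $\mathbb{VAR}[N(\tau)]$ directly after noting the means agree; these are algebraically the same step. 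Your explicit note on the degenerate case $q=1$ for HP2 is a minor addition not present in the paper.
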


\begin{proof}
We consider a fixed $\mathrm{\mathbb{E}}[C]$ and use the following
notation for the corresponding policy parameters under this $\mathrm{\mathbb{E}}[C]$
value: TP1 with parameter $T_{1}$, TP2 with parameter $T_{2}$, HP1
with parameters $q_{H1}$ and $T_{H1}$, and HP2 with parameters $q_{H2}$
and $T_{H2}$. Recalling the $\mathrm{\mathbb{E}}[C]$ expressions
in Table \ref{t1}, we note that, by assumption,
\begin{equation}
\frac{1}{\lambda}\mathrm{\mathbb{E}}[X_{q_{H1}}]=T_{1},\label{HP1-TBP1}
\end{equation}
and
\begin{eqnarray}
\frac{1}{\lambda}\mathrm{\mathbb{E}}[1+Z_{q_{H2}-1}]=\frac{1}{\lambda}+T_{2},\label{HP2-TBP2}
\end{eqnarray}
where $X\sim\text{Poisson}(\lambda T_{H1})$, $Z\sim\text{Poisson}(\lambda T_{H2})$.
Next, recalling the results in Table \ref{t1} and the assumption
of fixed $\mathrm{\mathbb{E}}[C]$ values for all the policies of
interest, we need to show that

\begin{equation}
\mathrm{\mathbb{E}}[X_{q_{H1}}(X_{q_{H1}}-1)]<\lambda^{2}T_{1}^{2},\label{HP1 vs TBP1}
\end{equation}
and
\begin{eqnarray}
\mathrm{\mathbb{E}}[Z_{q_{H2}-1}(Z_{q_{H2}-1}+1)]<2\lambda T_{2}+\lambda^{2}T_{2}^{2}.\label{HP2 vs TBP2}
\end{eqnarray}
In fact, by recalling Eqs. (\ref{HP1-TBP1}) and (\ref{HP2-TBP2}), we
have
\begin{eqnarray*}
\mathrm{\mathbb{E}}[X_{q_{H1}}(X_{q_{H1}}-1)]=\mathbb{VAR}[X_{q_{H1}}]+\mathrm{\mathbb{E}}^{2}[X_{q_{H1}}]-\mathrm{\mathbb{E}}[X_{q_{H1}}]<\mathrm{\mathbb{E}}^{2}[X_{q_{H1}}]=\lambda^{2}T_{1}^{2},
\end{eqnarray*}
and
\begin{eqnarray*}
\mathrm{\mathbb{E}}[Z_{q_{H2}-1}(Z_{q_{H2}-1}+1)] &  & =\mathbb{VAR}[Z_{q_{H2}-1}]+\mathrm{\mathbb{E}}^{2}[Z_{q_{H2}-1}]+\mathrm{\mathbb{E}}[Z_{q_{H2}-1}]\\
 &  & <2\mathrm{\mathbb{E}}[Z_{q_{H2}-1}]+\mathrm{\mathbb{E}}^{2}[Z_{q_{H2}-1}]\\
 &  & =2\lambda T_{2}+\lambda^{2}T_{2}^{2},
\end{eqnarray*}
where the inequalities are derived from Lemma \ref{VAR<E}.
\end{proof}
\begin{rem}
Following the same argument as in Remark \ref{remarkforcost}, in
terms of average cost criterion, the general class of HPs also outperforms
the general class of counterpart TPs.
\end{rem}

From Lemma \ref{E2<E2}, we deduce a stronger result as follows, which
allows us to compare two HP policies of the same type under a fixed
$\mathrm{\mathbb{E}}[C]$.
\begin{thm}
\label{HPvsHPundersameEC}For a fixed expected consolidation cycle
length $\mathrm{\mathbb{E}}[C]$, the HP1 with larger quantity parameter
achieves larger $AOD$ than the HP1 with smaller quantity parameter,
and the similar result holds for HP2.
\end{thm}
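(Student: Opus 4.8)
The plan is to express $AOD$ for each HP family as a strictly increasing function of a single quantity, namely the second moment of the relevant truncated Poisson variable, and then to prove that this second moment is monotone in the quantity parameter by repeated application of Lemma~\ref{E2<E2}. First I would rewrite the entries of Table~\ref{t1} as
\[
AOD_{HP1}=\frac{1}{2\lambda}\!\left(\frac{\mathrm{\mathbb{E}}[Y_{q}^{2}]}{\mathrm{\mathbb{E}}[Y_{q}]}-1\right),\qquad
AOD_{HP2}=\frac{1}{2\lambda}\cdot\frac{\mathrm{\mathbb{E}}[Y_{q-1}^{2}]+\mathrm{\mathbb{E}}[Y_{q-1}]}{\mathrm{\mathbb{E}}[Y_{q-1}]+1},
\]
where $Y\sim\text{Poisson}(\lambda T)$. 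Since $\lambda\,\mathrm{\mathbb{E}}[C_{HP1}]=\mathrm{\mathbb{E}}[Y_{q}]$ and $\lambda\,\mathrm{\mathbb{E}}[C_{HP2}]=\mathrm{\mathbb{E}}[Y_{q-1}]+1$, holding $\mathrm{\mathbb{E}}[C]$ fixed is equivalent to holding $\mathrm{\mathbb{E}}[Y_{q}]\equiv c$ fixed for HP1 (resp. $\mathrm{\mathbb{E}}[Y_{q-1}]\equiv c$ for HP2); substituting, $AOD_{HP1}=\frac{1}{2\lambda}(\mathrm{\mathbb{E}}[Y_{q}^{2}]/c-1)$ and $AOD_{HP2}=\frac{1}{2\lambda}(\mathrm{\mathbb{E}}[Y_{q-1}^{2}]+c)/(c+1)$ are both strictly increasing in the truncated second moment. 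So it suffices to show that, along the family of HP1 (resp. HP2) policies sharing the common value $c$, the quantity $\mathrm{\mathbb{E}}[Y_{q}^{2}]$ (resp. $\mathrm{\mathbb{E}}[Y_{q-1}^{2}]$) is nondecreasing in $q$.

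The next step is to describe how the time parameter $T$ must change with $q$ to keep $\mathrm{\mathbb{E}}[C]$ fixed. Let $h(\mu,m)\triangleq\mathrm{\mathbb{E}}[(\text{Poisson}(\mu))_{m}]=\sum_{k=1}^{m}\mathbb{P}(\text{Poisson}(\mu)\geq k)$. Then $h(\cdot,m)$ is continuous and strictly increasing, with range $(0,m)$, by the stochastic monotonicity of the Poisson family; and $h(\mu,\cdot)$ is strictly increasing because $h(\mu,m+1)-h(\mu,m)=\mathbb{P}(\text{Poisson}(\mu)\geq m+1)>0$. Since the target value $c$ satisfies $c<q$ for every HP1 index $q$ under consideration (and $c<q-1$ for every HP2 index, so those indices exceed $1$), there is for each such $q$ a unique rate $\mu_{q}$ with $h(\mu_{q},q)=c$ (resp. $h(\mu_{q},q-1)=c$); and comparing $h(\mu_{q},q+1)>c=h(\mu_{q+1},q+1)$ forces $\mu_{q}$ to be strictly decreasing in $q$. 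Equivalently, to keep $\mathrm{\mathbb{E}}[C]$ fixed one must \emph{shrink} $T$ as $q$ grows, and consequently if $X\sim\text{Poisson}(\mu_{q})$ and $Y\sim\text{Poisson}(\mu_{q+1})$ then $X$ is stochastically larger than $Y$.

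Now the induction along consecutive integers. For HP1, the policies with quantity parameters $q$ and $q+1$ correspond to $X\sim\text{Poisson}(\mu_{q})$ and $Y\sim\text{Poisson}(\mu_{q+1})$ with $X$ stochastically larger than $Y$ and $\mathrm{\mathbb{E}}[X_{q}]=c=\mathrm{\mathbb{E}}[Y_{q+1}]$; Lemma~\ref{E2<E2} (with its integer parameter taken equal to $q$) then gives $\mathrm{\mathbb{E}}[X_{q}^{2}]\leq\mathrm{\mathbb{E}}[Y_{q+1}^{2}]$, hence $AOD$ does not decrease when $q$ increases by one, and iterating yields $AOD_{HP1}(q_{1})\leq AOD_{HP1}(q_{2})$ for any $q_{1}<q_{2}$. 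For HP2 the argument is identical with the truncation index shifted down by one: with $\mathrm{\mathbb{E}}[X_{q-1}]=c=\mathrm{\mathbb{E}}[Y_{q}]$, Lemma~\ref{E2<E2} (integer parameter $q-1$) gives $\mathrm{\mathbb{E}}[X_{q-1}^{2}]\leq\mathrm{\mathbb{E}}[Y_{q}^{2}]$, so $\mathrm{\mathbb{E}}[Y_{q-1}^{2}]$ is nondecreasing in $q$ and $AOD_{HP2}$ increases likewise; strictness follows because $\mu_{q}>\mu_{q+1}$ makes the coupling used in the proof of Lemma~\ref{E2<E2} genuinely ordered. The part that needs care is matching the hypotheses of Lemma~\ref{E2<E2}: one must get the direction of the stochastic ordering right (this is exactly the observation that at fixed $\mathrm{\mathbb{E}}[C]$ a larger $q$ forces a smaller $T$), verify that the first-moment hypothesis holds — here with equality, which is precisely the fixed-$\mathrm{\mathbb{E}}[C]$ assumption — and note that the lemma only bridges truncation levels differing by one, which is why the comparison must be assembled inductively over consecutive values of $q$ rather than in a single step.
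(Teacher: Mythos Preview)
Your proof is correct and follows essentially the same route as the paper: reduce the comparison to consecutive quantity parameters, observe that fixing $\mathrm{\mathbb{E}}[C]$ forces the smaller-$q$ policy to have the larger Poisson rate (hence the required stochastic dominance), and then apply Lemma~\ref{E2<E2} to pass from the first-moment equality to the second-moment inequality. You are more explicit than the paper in justifying that the time parameter must shrink as $q$ grows and in chaining the one-step comparison inductively to arbitrary $q_{1}<q_{2}$, but the core argument is identical.
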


\begin{proof}
We consider a fixed $\mathrm{\mathbb{E}}[C]$ and use the following
notation for the corresponding policy parameters under this $\mathrm{\mathbb{E}}[C]$
value: the first HP1 with parameters $q_{H}$ and $T_{H}$, the second
HP1 with parameters $q_{H}+1$ and $T_{H}'$. Recalling the $\mathrm{\mathbb{E}}[C]$
expressions in Table \ref{t1}, we note that, by assumption,
\begin{eqnarray}
\mathrm{\mathbb{E}}[X_{q_{H}}]=\mathrm{\mathbb{E}}[Y_{q_{H}+1}],\label{HP1-HP1}
\end{eqnarray}
where $\text{X\ensuremath{\sim}Poisson}(\lambda T_{H})$, $Y\sim\text{Poisson}(\lambda T_{H}')$.
Clearly, $T_{H}>T_{H}'$.

Next, recalling the results in Table \ref{t1} and the assumption
of fixed $\mathrm{\mathbb{E}}[C]$ values for all the policies of
interest, we need to show that
\begin{eqnarray}
\mathrm{\mathbb{E}}[X_{q_{H}}(X_{q_{H}}-1)]<\mathrm{\mathbb{E}}[Y_{q_{H}+1}(Y_{q_{H}+1}-1)].\label{HP1 vs HP1}
\end{eqnarray}
From Lemma \ref{E2<E2} and recalling  Eq. (\ref{HP1-HP1}), we have
\[
\mathrm{\mathbb{E}}[X_{q_{H}}^{2}]\leq\mathrm{\mathbb{E}}[Y_{q_{H}+1}^{2}],
\]
so that inequality (\ref{HP1 vs HP1}) is verified.

The same procedure can be applied to prove the similar result between
two HP2 policies.
\end{proof}

\section{Comparison of $AOD$ under Fixed Parameters $q$ and/or $T$\label{sec:Comparison-of-AOD-under fixed q and T}}

In \cite{CMW14}, the authors analytically show that under fixed parameters,
the general class of HPs outperform the general classes of counterpart
QP and TPs in terms of $AOD$. In this section, we provide a simpler
proof of the above statement based on the rewritten expressions in
Table \ref{t1}. Further, we show under fixed parameters, HP1 outperforms
HP2 in terms of $AOD$. The readers who are interested in the managerial
implications of this comparison of $AOD$ are referred to \cite{CMW14}.
\begin{thm}
\label{HP1bestfixed}With fixed parameters $q,T$, HP1 performs better
than QP and TP1 in terms of $AOD$.
\end{thm}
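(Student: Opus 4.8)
The plan is to work directly with the closed forms collected in Table~\ref{t1}. Writing $Y\sim\text{Poisson}(\lambda T)$, we have $AOD_{HP1}=\mathbb{E}[Y_{q}(Y_{q}-1)]/(2\lambda\mathbb{E}[Y_{q}])$, $AOD_{QP}=(q-1)/(2\lambda)$, and $AOD_{TP1}=T/2$. Since the denominator $2\lambda\mathbb{E}[Y_{q}]$ is positive, each of the two claimed inequalities ($AOD_{HP1}\le AOD_{QP}$ and $AOD_{HP1}\le AOD_{TP1}$) reduces to a polynomial inequality in the first and second moments of the truncated Poisson variable $Y_{q}$, and I would handle the two comparisons separately.

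For the comparison with QP, I would show $\mathbb{E}[Y_{q}(Y_{q}-1)]\le(q-1)\mathbb{E}[Y_{q}]$, equivalently $\mathbb{E}[Y_{q}^{2}]\le q\,\mathbb{E}[Y_{q}]$. This is immediate from the pointwise bound $0\le Y_{q}\le q$, which gives $Y_{q}^{2}\le q\,Y_{q}$ almost surely; taking expectations finishes it. The inequality is strict because $Y_{q}$ is not supported on $\{0,q\}$ (a nondegenerate Poisson variable places mass at intermediate values), so in fact $AOD_{HP1}<AOD_{QP}$.

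For the comparison with TP1, I would use $\lambda T=\mathbb{E}[Y]$ to rewrite the target $AOD_{HP1}\le T/2$ as $\mathbb{E}[Y_{q}(Y_{q}-1)]\le\mathbb{E}[Y]\,\mathbb{E}[Y_{q}]$. Decomposing $\mathbb{E}[Y_{q}(Y_{q}-1)]=\mathbb{VAR}[Y_{q}]+\mathbb{E}[Y_{q}]^{2}-\mathbb{E}[Y_{q}]$ and invoking Lemma~\ref{VAR<E} (applicable since $\mathbb{VAR}[Y]=\mathbb{E}[Y]<\infty$ for the Poisson $Y$) yields $\mathbb{VAR}[Y_{q}]<\mathbb{E}[Y_{q}]$, hence $\mathbb{E}[Y_{q}(Y_{q}-1)]<\mathbb{E}[Y_{q}]^{2}$. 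Since truncation cannot increase the mean, $\mathbb{E}[Y_{q}]\le\mathbb{E}[Y]$, so $\mathbb{E}[Y_{q}]^{2}\le\mathbb{E}[Y]\,\mathbb{E}[Y_{q}]$; chaining the two bounds gives the strict inequality $AOD_{HP1}<AOD_{TP1}$.

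I do not anticipate a serious obstacle; the only non-mechanical choice is recognizing which tool to apply where — the crude pointwise bound $Y_{q}\le q$ is enough against QP, whereas the sharper variance-domination property of Lemma~\ref{VAR<E} is exactly what is needed against TP1. The only point requiring care is tracking that both inequalities are strict, which rests on $Y$ being a nondegenerate Poisson variable.
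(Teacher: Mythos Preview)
Your proposal is correct and follows essentially the same route as the paper. For HP1 versus QP you use the pointwise bound $Y_q^2\le qY_q$, which is exactly the paper's computation $\mathbb{E}[(q-Y_q)Y_q]>0$ in disguise; for HP1 versus TP1 you invoke Lemma~\ref{VAR<E} to obtain $\mathbb{E}[Y_q(Y_q-1)]<\mathbb{E}[Y_q]^2$ and then use $\mathbb{E}[Y_q]<\lambda T$, precisely as the paper does.
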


\begin{proof}
On one hand, we need to show HP1 performs better than QP in terms
of $AOD$ with the same parameters $q,T$, from Table \ref{t1}, that
is,
\begin{eqnarray*}
\frac{\mathrm{\mathbb{E}}[Y_{q}(Y_{q}-1)]}{\mathrm{\mathbb{E}}[Y_{q}]}<q-1.
\end{eqnarray*}
In fact, $(q-1)\mathrm{\mathbb{E}}[Y_{q}]-\mathrm{\mathbb{E}}[Y_{q}(Y_{q}-1)]=q\mathrm{\mathbb{E}}[Y_{q}]-\mathrm{\mathbb{E}}[Y_{q}^{2}]=\mathrm{\mathbb{E}}[(q-Y_{q})Y_{q}]>0$.

On the other hand, we need to show HP1 performs better than TP1
in terms of $AOD$ with the same parameters $q,T$, from Table \ref{t1},
that is,
\begin{eqnarray*}
\frac{\mathrm{\mathbb{E}}[Y_{q}(Y_{q}-1)]}{\mathrm{\mathbb{E}}[Y_{q}]}<\lambda T.
\end{eqnarray*}
In fact, from Lemma \ref{VAR<E}, we have $\mathbb{VAR}[Y_{q}]<\mathrm{\mathbb{E}}[Y_{q}]$,
which can written as
\[
\mathrm{\mathbb{E}}[Y_{q}(Y_{q}-1)]<\mathrm{\mathbb{E}}^{2}[Y_{q}].
\]
Further, we have $\mathrm{\mathbb{E}}[Y_{q}]<\lambda T$ since $Y\sim\text{Poisson}(\lambda T)$.
Thus, we arrive at the desired inequality.
\end{proof}
\begin{thm}
\label{HP2bestfixed}With fixed parameters $q,T$, HP2 performs better
than QP and TP2 in terms of $AOD$.
\end{thm}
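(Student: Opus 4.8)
The plan is to mirror the structure of the proof of Theorem~\ref{HP1bestfixed}: split the claim into the two comparisons $AOD_{HP2}<AOD_{QP}$ and $AOD_{HP2}<AOD_{TP2}$, and reduce each, via the closed forms in Table~\ref{t1}, to an inequality between moments of the truncated Poisson variable $Y_{q-1}=\min(Y,q-1)$ with $Y\sim\text{Poisson}(\lambda T)$.

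For the comparison with QP it suffices, after clearing the positive denominator, to show $(q-1)\,\mathbb{E}[Y_{q-1}+1]-\mathbb{E}[Y_{q-1}(Y_{q-1}+1)]>0$. I would verify the algebraic identity
\[
(q-1)\mathbb{E}[Y_{q-1}+1]-\mathbb{E}[Y_{q-1}(Y_{q-1}+1)]=\mathbb{E}\bigl[(q-1-Y_{q-1})(1+Y_{q-1})\bigr],
\]
whose integrand is nonnegative because $Y_{q-1}\le q-1$ a.s.\ and $1+Y_{q-1}\ge1$, and is strictly positive on $\{Y\le q-2\}$, an event of positive probability for a Poisson variable; hence the expectation is positive. (For $q=1$ the policy degenerates, $Y_{0}\equiv0$ and $AOD_{HP2}=0=AOD_{QP}$, so one either reads the statement as ``$\le$'' there or assumes $q\ge2$.)

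For the comparison with TP2 I would first rewrite the Table~\ref{t1} expression in the convenient form
\[
2\lambda\,AOD_{TP2}=\frac{(\lambda T)^{2}+2\lambda T}{\lambda T+1}=(\lambda T+1)-\frac{1}{\lambda T+1},
\]
which is precisely the $q\to\infty$ analogue of $2\lambda\,AOD_{HP2}$ since $Y_{q-1}\uparrow Y$. Then apply Lemma~\ref{VAR<E} to the Poisson $Y$ (for which $\mathbb{VAR}[Y]=\mathbb{E}[Y]$): $\mathbb{VAR}[Y_{q-1}]<\mathbb{E}[Y_{q-1}]$, so writing $m\triangleq\mathbb{E}[Y_{q-1}]$,
\[
\mathbb{E}[Y_{q-1}(Y_{q-1}+1)]=\mathbb{VAR}[Y_{q-1}]+m^{2}+m<m^{2}+2m,
\]
whence $2\lambda\,AOD_{HP2}<\dfrac{m^{2}+2m}{m+1}=(m+1)-\dfrac{1}{m+1}$. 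Since $m\mapsto(m+1)-1/(m+1)$ is increasing on $[0,\infty)$ and $m=\mathbb{E}[\min(Y,q-1)]\le\mathbb{E}[Y]=\lambda T$, substituting $\lambda T$ for $m$ yields $2\lambda\,AOD_{HP2}<2\lambda\,AOD_{TP2}$.

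The only substantive input is Lemma~\ref{VAR<E}, which delivers the strict inequality $\mathbb{VAR}[Y_{q-1}]<\mathbb{E}[Y_{q-1}]$; everything else is the two identities above and a one-line monotonicity check, so I do not expect a real obstacle. The mild subtlety is recognizing that the TP2 formula is exactly the untruncated limit of the HP2 formula, so that the pair ``monotonicity of $m\mapsto(m+1)-1/(m+1)$'' plus ``$m\le\lambda T$'' is what bridges the truncated and untruncated expressions.
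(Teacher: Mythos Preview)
Your proposal is correct and follows essentially the same approach as the paper: for the QP comparison you use the identical identity $(q-1)\mathbb{E}[1+Y_{q-1}]-\mathbb{E}[Y_{q-1}(Y_{q-1}+1)]=\mathbb{E}[(q-1-Y_{q-1})(1+Y_{q-1})]>0$, and for the TP2 comparison you use Lemma~\ref{VAR<E} to get $\mathbb{E}[Y_{q-1}(Y_{q-1}+1)]<(m+1)^{2}-1$ and then the monotonicity of $x\mapsto x-1/x$ together with $m\le\lambda T$, exactly as the paper does. Your added remark about the degenerate case $q=1$ (where $Y_{q-1}\equiv0$ and Lemma~\ref{VAR<E} no longer applies) is a welcome clarification that the paper omits.
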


\begin{proof}
On one hand, we need to show HP2 performs better than QP in terms
of $AOD$ with the same parameters $q,T$, from Table \ref{t1}, that
is,
\begin{eqnarray*}
\frac{\mathrm{\mathbb{E}}[Y_{q-1}(Y_{q-1}+1)]}{\mathrm{\mathbb{E}}[1+Y_{q-1}]}<q-1.
\end{eqnarray*}
In fact, $(q-1)\mathrm{\mathbb{E}}[1+Y_{q-1}]-\mathrm{\mathbb{E}}[Y_{q-1}(Y_{q-1}+1)]=\mathrm{\mathbb{E}}[(q-1-Y_{q-1})(Y_{q-1}+1)]>0$.

On the other hand, we need to show HP2 performs better than TP2
in terms of $AOD$ with the same parameters $q,T$, from Table \ref{t1},
that is,
\begin{eqnarray*}
\frac{\mathrm{\mathbb{E}}[Y_{q-1}(Y_{q-1}+1)]}{\mathrm{\mathbb{E}}[1+Y_{q-1}]}<\frac{2\lambda T+\lambda^{2}T^{2}}{1+\lambda T}.
\end{eqnarray*}
In fact, from Lemma \ref{VAR<E}, we have $\mathbb{VAR}[Y_{q-1}]<\mathrm{\mathbb{E}}[Y_{q-1}]$,
which can written as
\[
\mathrm{\mathbb{E}}[Y_{q-1}(Y_{q-1}+1)]<\mathrm{\mathbb{E}}^{2}[Y_{q-1}+1]-1.
\]
Further, due to $\mathrm{\mathbb{E}}[Y_{q-1}]<\lambda T$, we have
\begin{eqnarray*}
\mathrm{\mathbb{E}}[Y_{q-1}+1]-\frac{1}{\mathrm{\mathbb{E}}[Y_{q-1}+1]}<(\lambda T+1)-\frac{1}{\lambda T+1}.
\end{eqnarray*}
Thus, we arrive at the desired inequality.
\end{proof}

The following result allows us to compare HP1 and HP2 with fixed parameters
$q$ and $T$, which relies on Lemma \ref{ratiobetween2and1moment}.

\begin{thm}
\label{HP1betterthanHP2fixed}With fixed parameters $q$ and $T$,
HP1 performs better than HP2 in terms of $AOD$.
\end{thm}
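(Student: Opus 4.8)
The plan is to reduce the claim to a single inequality for the moments of a truncated Poisson variable, and then settle that inequality using the Poisson structure — which is where Lemma~\ref{ratiobetween2and1moment} enters. Throughout put $\mu:=\lambda T$, let $Y\sim\text{Poisson}(\mu)$, and $p:=\mathbb{P}(Y\ge q)$ (assume $q\ge 2$, the case $q=1$ being trivial). From Table~\ref{t1}, using $\mathbb{E}[Y_{q-1}(Y_{q-1}+1)]=\mathbb{E}[(1+Y_{q-1})^2]-\mathbb{E}[1+Y_{q-1}]$, the inequality $AOD_{HP1}<AOD_{HP2}$ is equivalent to $\mathbb{E}[Y_q^{\,2}]/\mathbb{E}[Y_q]<\mathbb{E}[(1+Y_{q-1})^2]/\mathbb{E}[1+Y_{q-1}]$. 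I would prove this by coupling the two truncations on one space: since $\min(Y+1,q)=1+\min(Y,q-1)$ and $\min(Y,q)=\min(Y,q-1)+1_{Y\ge q}$, we get $1+Y_{q-1}=Y_q+1_{Y<q}$, hence $\mathbb{E}[1+Y_{q-1}]=\mathbb{E}[Y_q]+(1-p)$ and $\mathbb{E}[(1+Y_{q-1})^2]=\mathbb{E}[Y_q^{\,2}]+2\mathbb{E}[Y_q1_{Y<q}]+(1-p)$. Clearing denominators, the claim becomes
\[
(1-p)\,\mathbb{E}[Y_q(Y_q-1)]<2\,\mathbb{E}[Y1_{Y<q}]\,\mathbb{E}[Y_q].
\]

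Next I would substitute the Poisson moment-shift identities $\mathbb{E}[Y1_{Y<q}]=\mu\,\mathbb{P}(Y\le q-2)$ and $\mathbb{E}[Y(Y-1)1_{Y<q}]=\mu^2\,\mathbb{P}(Y\le q-3)$, whence $\mathbb{E}[Y_q]=\mu\,\mathbb{P}(Y\le q-2)+qp$ and $\mathbb{E}[Y_q(Y_q-1)]=\mu^2\,\mathbb{P}(Y\le q-3)+q(q-1)p$. With $F_k:=\mathbb{P}(Y\le k)$ the inequality reads $\mu^2F_{q-3}F_{q-1}+q(q-1)p\,F_{q-1}<2\mu^2F_{q-2}^{\,2}+2qp\,\mu F_{q-2}$; absorbing $\mu^2F_{q-3}F_{q-1}\le\mu^2F_{q-2}^{\,2}$ (log-concavity of the Poisson cdf) leaves $q(q-1)p\,F_{q-1}<\mu^2F_{q-2}^{\,2}+2qp\,\mu F_{q-2}$. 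Since $\mathbb{E}[Y_q]=\mu F_{q-2}+qp$ and $1-p=F_{q-1}$, the right side is $\mathbb{E}[Y_q]^2-q^2p^2$ and the left is $q(q-1)p(1-p)$, so the whole theorem reduces to
\[
\mathbb{E}[Y_q]^2>q\,\mathbb{P}(Y\ge q)\bigl(q-1+\mathbb{P}(Y\ge q)\bigr).
\]
This is where I would bring in Lemma~\ref{ratiobetween2and1moment}: since $\mathbb{E}[Y_q^{\,2}]/\mathbb{E}[Y_q]$ is monotone in $\mu$, it constrains the joint trajectory of $(\mathbb{E}[Y_q],\mathbb{E}[Y_q^{\,2}])$ strongly enough to reduce the verification of this inequality to its endpoint behaviour as $\mu\to0$ and $\mu\to\infty$, where it is immediate.

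The hard part is exactly this last inequality, because it is \emph{tight at $\mu\to\infty$}: there both sides tend to $q^2$, the gap being only of order $q(q-1)\mathbb{P}(Y=q-1)\to 0$. In particular the coarse bounds one has on hand — $\mathbb{E}[Y_q]\ge q\,\mathbb{P}(Y\ge q)$, or $\mathbb{VAR}[Y_q]<\mathbb{E}[Y_q]$ from Lemma~\ref{VAR<E}, which only yields $\mathbb{E}[Y_q^{\,2}]<\mathbb{E}[Y_q]^2+\mathbb{E}[Y_q]$ — are each individually too weak, and the finer Poisson input (the moment-shift identities and log-concavity of $F_k$) is genuinely needed to recover the $O\bigl(q(q-1)\mathbb{P}(Y=q-1)\bigr)$ surplus, together with the monotonicity from Lemma~\ref{ratiobetween2and1moment} to reduce to the boundary. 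As a sanity check, $q=2$ is transparent: there $\mathbb{E}[Y_1(Y_1-1)]=0$ and the whole statement collapses to the elementary $e^{-\mu}>1-\mu$.
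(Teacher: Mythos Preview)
Your initial reduction to $\mathbb{E}[Y_q^{\,2}]/\mathbb{E}[Y_q]<\mathbb{E}[(1+Y_{q-1})^2]/\mathbb{E}[1+Y_{q-1}]$ is correct, and the subsequent algebra (the coupling $1+Y_{q-1}=Y_q+1_{Y<q}$, the Poisson moment shifts, the log-concavity bound $F_{q-3}F_{q-1}\le F_{q-2}^{\,2}$) is all sound, right down to the target $\mathbb{E}[Y_q]^2>q\,\mathbb{P}(Y\ge q)\bigl(q-1+\mathbb{P}(Y\ge q)\bigr)$. The genuine gap is the last step: you invoke Lemma~\ref{ratiobetween2and1moment} by asserting that monotonicity of $\mathbb{E}[Y_q^{\,2}]/\mathbb{E}[Y_q]$ ``constrains the joint trajectory \ldots\ strongly enough to reduce the verification to its endpoint behaviour,'' but you give no mechanism for this, the reduced inequality no longer even contains $\mathbb{E}[Y_q^{\,2}]$, and---as you yourself note---the inequality is \emph{tight} as $\mu\to\infty$, so checking endpoints cannot by itself establish a strict inequality for all $\mu$. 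That paragraph is a hope, not a proof.

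The irony is that you walked straight past the point where Lemma~\ref{ratiobetween2and1moment} applies cleanly. The paper's argument rests on the Poisson derivative identity $\tfrac{d}{d\mu}\mathbb{E}[g(Y)]=\mathbb{E}[g(Y+1)]-\mathbb{E}[g(Y)]$. Taking $g(x)=(x\wedge q)^2$ and $g(x)=x\wedge q$, and using $(Y+1)\wedge q=1+Y_{q-1}$, the quotient rule gives
\[
\frac{d}{d\mu}\,\frac{\mathbb{E}[Y_q^{\,2}]}{\mathbb{E}[Y_q]}
=\frac{\mathbb{E}[(1+Y_{q-1})^{2}]\,\mathbb{E}[Y_q]-\mathbb{E}[Y_q^{\,2}]\,\mathbb{E}[1+Y_{q-1}]}{\mathbb{E}[Y_q]^{2}},
\]
so your \emph{first} reduced inequality is literally the statement that this derivative is positive---which is exactly Lemma~\ref{ratiobetween2and1moment}. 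All of the later manipulations are unnecessary, and the inequality you ultimately isolate is in fact harder to attack directly than the one you started from.
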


\begin{proof}
From Table \ref{t1}, we need to show
\[
\frac{\mathrm{\mathbb{E}}[Y_{q}(Y_{q}-1)]}{\mathrm{\mathbb{E}}[Y_{q}]}<\frac{\mathrm{\mathbb{E}}[Y_{q-1}(Y_{q-1}+1)]}{\mathrm{\mathbb{E}}[1+Y_{q-1}]}.
\]
After simplification, it suffices to show
\begin{eqnarray}
\frac{\mathrm{\mathbb{E}}[Y_{q}^{2}]}{\mathrm{\mathbb{E}}[Y_{q}]}<\frac{\mathrm{\mathbb{E}}[(Y_{q-1}+1)^{2}]}{\mathrm{\mathbb{E}}[Y_{q-1}+1]}.\label{HP1 vs HP2}
\end{eqnarray}

Note for $X\sim\text{Poisson}(\mu)$, we have
\begin{eqnarray*}
\frac{d}{d\mu}\mathrm{\mathbb{E}}[g(X)]=\mathrm{\mathbb{E}}[g(X+1)]-\mathrm{\mathbb{E}}[g(X)],
\end{eqnarray*}
for any appropriate function $g(x)$. Let $\mu=\lambda T$, $g_{1}(x)=\left(x\wedge q\right)^{2}$,
and $g_{2}(x)=x\wedge q$, for $x\geq0$, we have
\begin{eqnarray*}
\frac{d}{d\mu}\mathrm{\mathbb{E}}[Y_{q}^{2}]=\frac{d}{d\mu}\mathrm{\mathbb{E}}[g_{1}(Y)]= &  & \mathrm{\mathbb{E}}[g_{1}(Y+1)]-\mathrm{\mathbb{E}}[g_{1}(Y)]\\
= &  & \mathrm{\mathbb{E}}[(Y_{q-1}+1)^{2}]-\mathrm{\mathbb{E}}[Y_{q}^{2}],
\end{eqnarray*}
and
\begin{eqnarray*}
\frac{d}{d\mu}\mathrm{\mathbb{E}}[Y_{q}]=\frac{d}{d\mu}\mathrm{\mathbb{E}}[g_{2}(Y)]= &  & \mathrm{\mathbb{E}}[g_{2}(Y+1)]-\mathrm{\mathbb{E}}[g_{2}(Y)]\\
= &  & \mathrm{\mathbb{E}}[Y_{q-1}+1]-\mathrm{\mathbb{E}}[Y_{q}].
\end{eqnarray*}
Hence,
\begin{eqnarray*}
\frac{d}{d\mu}\frac{\mathrm{\mathbb{E}}[Y_{q}^{2}]}{\mathrm{\mathbb{E}}[Y_{q}]}= &  & \frac{(\mathrm{\mathbb{E}}[(Y_{q-1}+1)^{2}]-\mathrm{\mathbb{E}}[Y_{q}^{2}])\mathrm{\mathbb{E}}[Y_{q}]-\mathrm{\mathbb{E}}[Y_{q}^{2}](\mathrm{\mathbb{E}}[Y_{q-1}+1]-\mathrm{\mathbb{E}}[Y_{q}])}{\mathrm{\mathbb{E}}^{2}[Y_{q}]}\\
= &  & \frac{\mathrm{\mathbb{E}}[(Y_{q-1}+1)^{2}]\mathrm{\mathbb{E}}[Y_{q}]-\mathrm{\mathbb{E}}[Y_{q}^{2}]\mathrm{\mathbb{E}}[Y_{q-1}+1]}{\mathrm{\mathbb{E}}^{2}[Y_{q}]}.
\end{eqnarray*}
From Lemma \ref{ratiobetween2and1moment}, we know $\frac{d}{d\mu}\frac{\mathrm{\mathbb{E}}[Y_{q}^{2}]}{\mathrm{\mathbb{E}}[Y_{q}]}>0$,
thus
\begin{eqnarray*}
\mathrm{\mathbb{E}}[(Y_{q-1}+1)^{2}]\mathrm{\mathbb{E}}[Y_{q}]-\mathrm{\mathbb{E}}[Y_{q}^{2}]\mathrm{\mathbb{E}}[Y_{q-1}+1]>0,
\end{eqnarray*}
which implies inequality (\ref{HP1 vs HP2}) is satisfied.
\end{proof}

\section{Conclusion\label{sec:Conclusion}}

Motivated by applications in shipment consolidation, we first provide a new unified method to compute $AOD$ for any renewal-type clearing policy based on a martingale associated with the Poisson process and the martingale
stopping theorem. Our goal is to provide a complete analytical comparison of alternative clearing policies (of type QP, TP, and HP) in terms of $AOD$. 
Our proposed method lends itself for a generic analytical characterization of $AOD$, leading to a complete comparative analysis of the policies of interest. 
In particular, we demonstrate that, under a fixed expected consolidation cycle length, QP outperforms any other renewal-type clearing policy in terms of $AOD$, not limited to HPs and TPs (see Theorem \ref{Qbest}).
Also, we complete the proof for the comparison of $AOD$ between HPs
and TPs under a fixed expected consolidation cycle length (see Theorems \ref{HPbetterthanTP} and \ref{revised}), and we provide
a simplified proof for the $AOD$ comparison among HPs, TPs and QP under
fixed parameters (see Theorems \ref{HP1bestfixed} and \ref{HP2bestfixed}), which are related to a property of truncated Poisson
random variables: for a truncated Poisson random variable $Y_{N}=\min(Y,N)$, $\mathbb{VAR}[Y_{N}]<\mathrm{\mathbb{E}}[Y_{N}]$ (see Lemma \ref{VAR<E}). 

Furthermore, we provide explicit and stronger comparative results between two HPs of the same type under a fixed expected consolidation cycle length (see Theorem \ref{HPvsHPundersameEC}),
which rely on a property of truncated random variables: given two
integer valued random variables $X,Y$, $X$ is stochastically larger
than $Y$, if $\mathrm{\mathbb{E}}[X_{q}]=\mathrm{\mathbb{E}}[Y_{q+1}]$,
where $q$ is a positive integer, then $\mathrm{\mathbb{E}}[X_{q}^{2}]\leq\mathrm{\mathbb{E}}[Y_{q+1}^{2}]$ (see Lemma \ref{E2<E2}).

Last but not least, we analytically show HP1 performs better than HP2 in terms
of $AOD$ under fixed parameters (see Theorem \ref{HP1betterthanHP2fixed}), which is equivalent to another property
of truncated Poisson random variables: $X\sim\text{Poisson}(\mu)$,
then $\mathrm{\mathbb{E}}[X_{N}^{2}]/\mathrm{\mathbb{E}}[X_{N}]$
is increasing with respect to $\mu$ (see Lemma \ref{ratiobetween2and1moment}).

Our results offer insightful and analytically justifiable guidance for logistics managers in selecting an appropriate shipment consolidation policy with an eye on the resulting service performance. Several challenging extensions of the problem at hand remain open for future research including the case where shipment consolidation efforts are subject to multiple and/or more general input processes, e.g., the case of a multi-commodity stochastic clearing system subject to Markov-modulated, renewal, or Brownian motion input processes.

\bibliographystyle{plain}
\bibliography{References}

\appendix

\section{Appendix: The Case with No Empty Shipments \label{sec:Append}}

Under TP1 and HP1, there may be empty shipments, which happens when
$N(T)=0$. In this Appendix, we consider revised TP1 and revised HP1,
which do not allow empty shipments, as introduced in Section \ref{sec:Introduction}.

Under revised HP1 with parameters $q,T$, the following recursion
equation about the expected consolidation cycle length $\mathrm{\mathbb{E}}[C_{RHP1}]$
is satisfied,
\begin{eqnarray}
\mathrm{\mathbb{E}}[C_{RHP1}]=\mathbb{P}(N(T)\geq1)\mathrm{\mathbb{E}}[\tau_{q}\wedge T\mid N(T)\geq1]+\mathbb{P}(N(T)=0)(T+\mathrm{\mathbb{E}}[C_{RHP1}]).\label{L-HP3-equation}
\end{eqnarray}
The equation means if no order arrives within $T$ units time, which
happens with probability $\mathbb{P}(N(T)=0)$, the consolidation
cycle restarts; if there are orders arriving within $T$ units time,
which happens with probability $\mathbb{P}(N(T)\geq1)$, the load
is dispatched at stopping time $\tau_{q}\wedge T$.

By noticing
\begin{eqnarray*}
 &  & \mathrm{\mathbb{E}}[\tau_{q}\wedge T]\\
= &  & \mathbb{P}(N(T)\geq1)\mathrm{\mathbb{E}}[\tau_{q}\wedge T\mid N(T)\geq1]+\mathbb{P}(N(T)=0)\mathrm{\mathbb{E}}[\tau_{q}\wedge T\mid N(T)=0]\\
= &  & \mathbb{P}(N(T)\geq1)\mathrm{\mathbb{E}}[\tau_{q}\wedge T\mid N(T)\geq1]+\mathbb{P}(N(T)=0)T,
\end{eqnarray*}
we have
\begin{eqnarray}
\mathbb{P}(N(T)\geq1)\mathrm{\mathbb{E}}[\tau_{q}\wedge T\mid N(T)\geq1]=\mathrm{\mathbb{E}}[\tau_{q}\wedge T]-\mathbb{P}(N(T)=0)T.\label{a}
\end{eqnarray}
Replacing Eq. (\ref{a}) into Eq. (\ref{L-HP3-equation}), and recalling $\mathrm{\mathbb{E}}[C_{HP1}]$
in Table \ref{t1}, we have
\begin{eqnarray}
\mathrm{\mathbb{E}}[C_{RHP1}]=\frac{\mathrm{\mathbb{E}}[\tau_{q}\wedge T]}{1-\mathbb{P}(N(T)=0)}=\frac{1}{\lambda}\frac{\mathrm{\mathbb{E}}[Y_{q}]}{1-\mathbb{P}(Y=0)}=\frac{\mathrm{\mathbb{E}}[C_{HP1}]}{1-\mathbb{P}(Y=0)},\label{firstL-HP3}
\end{eqnarray}
where $Y\sim\text{Poisson}(\lambda T)$.

Next, we calculate the expected cumulative delay within one consolidation
cycle under revised HP1, which is denoted as $\mathrm{\mathbb{E}}[W_{RHP1}]$.
The following recursion equation is satisfied,
\begin{eqnarray}
\mathrm{\mathbb{E}}[W_{RHP1}]=\mathbb{P}(N(T)\geq1)\mathrm{\mathbb{E}}[\int_{0}^{\tau_{q}\wedge T}N(t)dt|N(T)\geq1]+\mathbb{P}(N(T)=0)\mathrm{\mathbb{E}}[W_{RHP1}].\label{W-HP3-equation}
\end{eqnarray}
This equation means if no order arrives within $T$ units time, which
happens with probability $\mathbb{P}(N(T)=0)$, the consolidation
system restarts; if there are orders arriving within $T$ units time,
which happens with probability $\mathbb{P}(N(T)\geq1)$, the cumulative
delay within one consolidation cycle is $\int_{0}^{\tau_{q}\wedge T}N(t)dt$.

By noticing
\begin{eqnarray}
 &  & \mathrm{\mathbb{E}}[\int_{0}^{\tau_{q}\wedge T}N(t)dt]\nonumber \\
= &  & \mathbb{P}(N(T)\geq1)\mathrm{\mathbb{E}}[\int_{0}^{\tau_{q}\wedge T}N(t)dt|N(T)\geq1]+\mathbb{P}(N(T)=0)\mathrm{\mathbb{E}}[\int_{0}^{\tau_{q}\wedge T}N(t)dt|N(T)=0]\nonumber \\
= &  & \mathbb{P}(N(T)\geq1)\mathrm{\mathbb{E}}[\int_{0}^{\tau_{q}\wedge T}N(t)dt|N(T)\geq1],\label{b}
\end{eqnarray}
and replacing Eq. (\ref{b}) into Eq. (\ref{W-HP3-equation}), together with
recalling $\mathrm{\mathbb{E}}[W_{HP1}]$ in Table \ref{t1}, we have
\begin{eqnarray}
\mathrm{\mathbb{E}}[W_{RHP1}]=\frac{\mathrm{\mathbb{E}}[\int_{0}^{\tau_{q}\wedge T}N(t)dt]}{1-\mathbb{P}(N(T)=0)}=\frac{\mathrm{\mathbb{E}}[W_{HP1}]}{1-\mathbb{P}(Y=0)}=\frac{1}{2\lambda}\frac{\mathrm{\mathbb{E}}[Y_{q}(Y_{q}-1)]}{1-\mathbb{P}(Y=0)}.\label{firstW-HP3}
\end{eqnarray}

Define a new random variable $\tilde{Y}$, which has the same distribution
of $Y\mid Y>0$. We can rewrite
\begin{eqnarray}
\mathrm{\mathbb{E}}[C_{RHP1}]=\frac{1}{\lambda}\mathrm{\mathbb{E}}[\tilde{Y}_{q}],\label{L-HP3}
\end{eqnarray}
\begin{eqnarray}
\mathrm{\mathbb{E}}[W_{RHP1}]=\frac{1}{2\lambda}\mathrm{\mathbb{E}}[\tilde{Y}_{q}(\tilde{Y}_{q}-1)].\label{W-HP3}
\end{eqnarray}

Similarly, we can obtain the expected cycle length under revised TP1
with parameters $T$ is
\begin{eqnarray}
\mathrm{\mathbb{E}}[C_{RTP1}]=\frac{\mathrm{\mathbb{E}}[C_{TP1}]}{1-\mathbb{P}(N(T)=0)}=\frac{T}{1-e^{-\lambda T}},\label{L-TP3}
\end{eqnarray}
and the cumulative delay with one consolidation cycle under revised
TP1 with parameters $T$ is
\begin{eqnarray}
\mathrm{\mathbb{E}}[W_{RTP1}]=\frac{\mathrm{\mathbb{E}}[W_{TP1}]}{1-\mathbb{P}(N(T)=0)}=\frac{\lambda T^{2}}{2(1-e^{-\lambda T})}.\label{W-TP3}
\end{eqnarray}

From Eqs. (\ref{firstL-HP3}), (\ref{firstW-HP3}), (\ref{L-TP3}) and
(\ref{W-TP3}) and the definition of $AOD$, we have that $AOD$ of revised
HP1 is the same as HP1, $AOD$ of revised TP1 is the same as TP1, if
the parameters $q,T$ are fixed. From Theorem \ref{HP1bestfixed},
with fixed parameters $q,T$, revised HP1 also performs better than
QP and revised TP1 in terms of $AOD$.

From Theorem \ref{Qbest}, we can conclude that for a given expected
consolidation cycle length, QP performs better than revised HP1 and
revised TP1, in terms of $AOD$. In the following, we provide the comparison
between revised HP1 and revised TP1 with a given expected consolidation
cycle length.

Suppose $Y\sim\text{Poisson}(\lambda_{1})$, $Z\sim\text{Poisson}(\lambda_{2})$
and $\lambda_{1}>\lambda_{2}$, we know $Y$ is stochastically larger
than $Z$. Define $\tilde{Y}\sim Y\mid Y>0$, and $\tilde{Z}\sim Z\mid Z>0$,
we show $\tilde{Y}$ is also stochastically larger than $\tilde{Z}$
in the following lemma.
\begin{lem}
\label{largerconditional}Let $Y\sim\text{Poisson}(\lambda)$, $\tilde{Y}$
is distributed as $Y\mid Y>0$, then $\mathbb{P}(\tilde{Y}>n)$ is
increasing in $\lambda$, for any integer $n\geq1$.
\end{lem}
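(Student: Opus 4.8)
The plan is to show the stronger statement that $\lambda\mapsto\mathbb{P}(\tilde{Y}>n)$ is \emph{strictly} increasing for $n\geq 1$, by differentiating in $\lambda$. With $Y\sim\text{Poisson}(\lambda)$ one has the elementary identity
\[
\mathbb{P}(\tilde{Y}>n)=\mathbb{P}(Y>n\mid Y>0)=\frac{\mathbb{P}(Y>n)}{\mathbb{P}(Y>0)},
\]
so, writing $u(\lambda)=\mathbb{P}(Y>n)=\mathrm{\mathbb{E}}[1_{\{Y\geq n+1\}}]$ and $v(\lambda)=\mathbb{P}(Y>0)=\mathrm{\mathbb{E}}[1_{\{Y\geq 1\}}]$ (both smooth and positive for $\lambda>0$), it suffices to show $(u/v)'\geq 0$.

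Next I would apply the Poisson differentiation identity already used in the proof of Theorem \ref{HP1betterthanHP2fixed}, namely $\frac{d}{d\lambda}\mathrm{\mathbb{E}}[g(Y)]=\mathrm{\mathbb{E}}[g(Y+1)]-\mathrm{\mathbb{E}}[g(Y)]$, to $g(x)=1_{\{x\geq n+1\}}$ and $g(x)=1_{\{x\geq 1\}}$; this yields $u'(\lambda)=\mathbb{P}(Y=n)$ and $v'(\lambda)=\mathbb{P}(Y=0)$. Consequently
\[
\frac{d}{d\lambda}\,\mathbb{P}(\tilde{Y}>n)=\frac{u'v-uv'}{v^{2}}=\frac{\mathbb{P}(Y=n)\,\mathbb{P}(Y>0)-\mathbb{P}(Y>n)\,\mathbb{P}(Y=0)}{\mathbb{P}(Y>0)^{2}},
\]
so the entire lemma reduces to proving the single inequality $\mathbb{P}(Y=n)\,\mathbb{P}(Y>0)\geq\mathbb{P}(Y>n)\,\mathbb{P}(Y=0)$, strictly when $n\geq 1$.

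To finish, I would substitute the Poisson mass function and cancel the common factor $e^{-2\lambda}$: the inequality becomes $\frac{\lambda^{n}}{n!}(e^{\lambda}-1)\geq\sum_{k>n}\frac{\lambda^{k}}{k!}$. Expanding $e^{\lambda}-1=\sum_{j\geq 1}\lambda^{j}/j!$ and reindexing $k=n+j$ rewrites the left-hand side as $\sum_{k>n}\frac{\lambda^{k}}{n!\,(k-n)!}$, and then a term-by-term comparison closes the argument, since $\frac{1}{n!\,(k-n)!}=\binom{k}{n}\frac{1}{k!}\geq\frac{1}{k!}$ for every $k>n$, with $\binom{k}{n}\geq 2$ whenever $k\geq n+1$ and $n\geq 1$, producing a strict inequality. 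Hence $\frac{d}{d\lambda}\mathbb{P}(\tilde{Y}>n)>0$ for all $\lambda>0$, which proves the claim. I do not expect a genuine obstacle: the one step deserving a word of care is the termwise differentiation of the Poisson series (equivalently, differentiating under the expectation), which is routine because the series involved converge locally uniformly in $\lambda$; the real content is just the reformulation followed by the binomial-coefficient comparison. (As an alternative that avoids calculus, one can write $\mathbb{P}(\tilde{Y}>n)=1-\big(\sum_{k=1}^{n}\lambda^{k}/k!\big)/(e^{\lambda}-1)$ and compare the power-series coefficients of the numerator of the derivative of that ratio; the coefficient of $\lambda^{m}$ is $0$ for $m\leq n$ and a negative multiple of $\binom{m}{n}-1$ for $m\geq n+1$, giving the same conclusion and reflecting the monotone likelihood-ratio structure of the Poisson family, which is also what underlies Lemma \ref{ratiobetween2and1moment}.)
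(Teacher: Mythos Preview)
Your proof is correct and follows the same overall strategy as the paper: differentiate $\mathbb{P}(\tilde{Y}>n)=\mathbb{P}(Y>n)/\mathbb{P}(Y>0)$ in $\lambda$, use $\frac{d}{d\lambda}\mathbb{P}(Y>n)=\mathbb{P}(Y=n)$, and reduce to showing the numerator $\mathbb{P}(Y=n)\mathbb{P}(Y>0)-\mathbb{P}(Y>n)\mathbb{P}(Y=0)$ is positive. The only real difference is in how you verify that last inequality. The paper rewrites the numerator as $\mathbb{P}(Y=n)-e^{-\lambda}\mathbb{P}(Y\ge n)$ and proves positivity by induction on $n$, using the Poisson recursion $\mathbb{P}(Y=k)=\frac{\lambda}{k}\mathbb{P}(Y=k-1)$ to obtain
\[
\mathbb{P}(Y=n)-e^{-\lambda}\mathbb{P}(Y\ge n)>\tfrac{\lambda}{n}\bigl(\mathbb{P}(Y=n-1)-e^{-\lambda}\mathbb{P}(Y\ge n-1)\bigr),
\]
with base case $n=0$ giving equality. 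You instead expand the power series and compare coefficients via $\binom{k}{n}\ge 2$ for $k\ge n+1$, $n\ge 1$. Your argument is a bit more direct and avoids the induction, while the paper's version stays in probabilistic language throughout; both are short and equally rigorous. A minor quibble: the phrase ``cancel the common factor $e^{-2\lambda}$'' is slightly loose, since the two terms carry $e^{-\lambda}(1-e^{-\lambda})$ and $e^{-2\lambda}$ respectively; what you actually do is multiply the whole numerator by $e^{2\lambda}$, which of course gives exactly the inequality you state.
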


\begin{proof}
Notice $\frac{d}{d\lambda}\mathbb{P}(Y>n)=\mathbb{P}(Y=n)$. Then
for $n\geq1$,
\begin{eqnarray*}
\frac{d}{d\lambda}\mathbb{P}(\tilde{Y}>n)= &  & \frac{d}{d\lambda}\frac{\mathbb{P}(Y>n)}{\mathbb{P}(Y>0)}\\
= &  & \frac{\mathbb{P}(Y=n)\mathbb{P}(Y>0)-\mathbb{P}(Y>n)\mathbb{P}(Y=0)}{(\mathbb{P}(Y>0))^{2}}\\
= &  & \frac{\mathbb{P}(Y=n)-e^{-\lambda}\mathbb{P}(Y\geq n)}{(\mathbb{P}(Y>0))^{2}}.
\end{eqnarray*}
In addition, by using $\mathbb{P}(Y=k)=\frac{\lambda}{k}\mathbb{P}(Y=k-1)$,
we have
\begin{eqnarray*}
\mathbb{P}(Y=n)-e^{-\lambda}\mathbb{P}(Y\geq n)= &  & \frac{\lambda}{n}\mathbb{P}(Y=n-1)-e^{-\lambda}\sum_{k=n}^{\infty}\frac{\lambda}{k}\mathbb{P}(Y=k-1)\\
> &  & \frac{\lambda}{n}\left(\mathbb{P}(Y=n-1)-e^{-\lambda}\mathbb{P}(Y\geq n-1)\right).
\end{eqnarray*}
Since $\mathbb{P}(Y=0)-e^{-\lambda}\mathbb{P}(Y\geq0)=0$, it follows
by induction that
\begin{eqnarray*}
\mathbb{P}(Y=n)-e^{-\lambda}\mathbb{P}(Y\geq n)>0.
\end{eqnarray*}
Therefore, $\frac{d}{d\lambda}\mathbb{P}(\tilde{Y}>n)>0$.
\end{proof}

\begin{thm}\label{revised}
For a given expected consolidation cycle length $\mathrm{\mathbb{E}}[C]$,
the revised HP1 with larger quantity parameter achieves larger $AOD$
than the revised HP1 with smaller quantity parameter, in terms of
$AOD$. In particular, revised HP1 achieves less $AOD$ than revised TP1, under a given expected consolidation cycle length $\mathrm{\mathbb{E}}[C]$.
\end{thm}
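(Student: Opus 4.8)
The plan is to recognize that revised HP1 with parameters $(q,T)$ and revised TP1 with parameter $T$ are simply HP1 and TP1 with the Poisson arrival count replaced by the zero-truncated Poisson variable $\tilde{Y}\sim(Y\mid Y>0)$, $Y\sim\text{Poisson}(\lambda T)$. Indeed, Eqs.~(\ref{L-HP3})--(\ref{W-HP3}) give $AOD_{RHP1}=\mathrm{\mathbb{E}}[\tilde{Y}_{q}(\tilde{Y}_{q}-1)]/(2\lambda\,\mathrm{\mathbb{E}}[\tilde{Y}_{q}])$ with $\mathrm{\mathbb{E}}[C]=\mathrm{\mathbb{E}}[\tilde{Y}_{q}]/\lambda$, and rewriting Eqs.~(\ref{L-TP3})--(\ref{W-TP3}) via $T/(1-e^{-\lambda T})=\mathrm{\mathbb{E}}[\tilde{Y}]/\lambda$ gives $AOD_{RTP1}=\mathrm{\mathbb{E}}[\tilde{Y}(\tilde{Y}-1)]/(2\lambda\,\mathrm{\mathbb{E}}[\tilde{Y}])$ with $\mathrm{\mathbb{E}}[C]=\mathrm{\mathbb{E}}[\tilde{Y}]/\lambda$; thus revised TP1 is the degenerate $q\to\infty$ case of revised HP1. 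The variable $\tilde{Y}$ carries the two properties of a Poisson count that drive Sections~\ref{sec:Properties-on-Truncated}--\ref{sec:Comparison-of-AOD}: it is stochastically increasing in its parameter (Lemma~\ref{largerconditional}), and $\mathbb{VAR}[\tilde{Y}]/\mathrm{\mathbb{E}}[\tilde{Y}]=(1-(1+\mu)e^{-\mu})/(1-e^{-\mu})<1$ for $\mu=\lambda T>0$. Accordingly I would re-run the proofs of Theorems~\ref{HPvsHPundersameEC} and~\ref{HPbetterthanTP} with $\tilde{Y}$ in place of $Y$.

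For the first assertion, fix $\mathrm{\mathbb{E}}[C]$ and let revised HP1 with quantity parameter $q$ have count $\tilde{X}$ of rate $\lambda T$ and revised HP1 with quantity parameter $q+1$ have count $\tilde{Y}$ of rate $\lambda T'$, so the constraint reads $\mathrm{\mathbb{E}}[\tilde{X}_{q}]=\mathrm{\mathbb{E}}[\tilde{Y}_{q+1}]$. Since $\mathrm{\mathbb{E}}[\tilde{Y}_{q+1}]>\mathrm{\mathbb{E}}[\tilde{Y}_{q}]$ (as $\mathbb{P}(\tilde{Y}\geq q+1)>0$), the choice $T\leq T'$ would force, through Lemma~\ref{largerconditional}, $\mathrm{\mathbb{E}}[\tilde{X}_{q}]\leq\mathrm{\mathbb{E}}[\tilde{Y}_{q}]<\mathrm{\mathbb{E}}[\tilde{Y}_{q+1}]$, a contradiction; hence $T>T'$ and $\tilde{X}$ is stochastically larger than $\tilde{Y}$. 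Lemma~\ref{E2<E2} (with $X=\tilde{X}$, $Y=\tilde{Y}$) then gives $\mathrm{\mathbb{E}}[\tilde{X}_{q}^{2}]\leq\mathrm{\mathbb{E}}[\tilde{Y}_{q+1}^{2}]$, and strictly, exactly as in the proof of Theorem~\ref{HPvsHPundersameEC}, because the coupled difference is positive on a positive-probability set (here $\mathrm{\mathbb{E}}[\tilde{X}_{q}]>\mathrm{\mathbb{E}}[\tilde{Y}_{q}]$); with $\mathrm{\mathbb{E}}[\tilde{X}_{q}]=\mathrm{\mathbb{E}}[\tilde{Y}_{q+1}]$ this yields $AOD_{RHP1(q)}<AOD_{RHP1(q+1)}$, and chaining over consecutive quantity parameters settles any $q_{1}<q_{2}$. (As implicit in the statement, the policies are assumed realizable at the given $\mathrm{\mathbb{E}}[C]$, i.e.\ $1<\lambda\mathrm{\mathbb{E}}[C]<q_{1}$.)

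For the ``in particular'' claim, let revised HP1 with quantity parameter $q$ have count $\tilde{X}$ of rate $\lambda T_{RH}$ and revised TP1 have count $\tilde{Z}$ of rate $\lambda T$; the constraint is $\mathrm{\mathbb{E}}[\tilde{X}_{q}]=\mathrm{\mathbb{E}}[\tilde{Z}]$, and arguing as above ($\mathrm{\mathbb{E}}[\tilde{Z}]>\mathrm{\mathbb{E}}[\tilde{Z}_{q}]$) gives $T_{RH}>T$, so $\tilde{X}$ is stochastically larger than $\tilde{Z}$ and therefore $\tilde{X}_{q}=\min(\tilde{X},q)$ is stochastically larger than $\tilde{Z}_{q}=\min(\tilde{Z},q)$. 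Since both $AOD$'s have the same denominator, it suffices to prove $\mathrm{\mathbb{E}}[\tilde{X}_{q}^{2}]<\mathrm{\mathbb{E}}[\tilde{Z}^{2}]$, the ``$q+1=\infty$'' analogue of Lemma~\ref{E2<E2}, which I would establish directly: telescoping $\tilde{Z}_{m+1}^{2}-\tilde{Z}_{m}^{2}=(2m+1)1_{\tilde{Z}\geq m+1}$ over $m\geq q$ gives $\mathrm{\mathbb{E}}[\tilde{Z}^{2}]-\mathrm{\mathbb{E}}[\tilde{Z}_{q}^{2}]=\sum_{m\geq q}(2m+1)\mathbb{P}(\tilde{Z}\geq m+1)>(2q+1)(\mathrm{\mathbb{E}}[\tilde{Z}]-\mathrm{\mathbb{E}}[\tilde{Z}_{q}])$ (strict since $\mathbb{P}(\tilde{Z}\geq q+2)>0$), whereas $u\mapsto(2q+1)u-u^{2}$ is increasing on $\{0,1,\dots,q\}$, so stochastic dominance of $\tilde{X}_{q}$ over $\tilde{Z}_{q}$ gives $\mathrm{\mathbb{E}}[\tilde{X}_{q}^{2}]-\mathrm{\mathbb{E}}[\tilde{Z}_{q}^{2}]\leq(2q+1)(\mathrm{\mathbb{E}}[\tilde{X}_{q}]-\mathrm{\mathbb{E}}[\tilde{Z}_{q}])=(2q+1)(\mathrm{\mathbb{E}}[\tilde{Z}]-\mathrm{\mathbb{E}}[\tilde{Z}_{q}])$; combining the two bounds yields $\mathrm{\mathbb{E}}[\tilde{X}_{q}^{2}]<\mathrm{\mathbb{E}}[\tilde{Z}^{2}]$, hence $AOD_{RHP1}<AOD_{RTP1}$. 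Alternatively, this case follows from the first assertion by sending the quantity parameter to $\infty$ at fixed $\mathrm{\mathbb{E}}[C]$ and invoking monotone/dominated convergence for $\mathrm{\mathbb{E}}[\tilde{X}_{q}]$ and $\mathrm{\mathbb{E}}[\tilde{X}_{q}^{2}]$.

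The routine parts are the passage to the zero-truncated count $\tilde{Y}$ and the first assertion, which is Theorem~\ref{HPvsHPundersameEC} essentially verbatim once Lemma~\ref{largerconditional} supplies the stochastic ordering. The main obstacle is the comparison with revised TP1: its count $\tilde{Z}$ is unbounded, so Lemma~\ref{E2<E2} does not apply off the shelf, and the variance argument behind Theorem~\ref{HPbetterthanTP} is now too lossy (both $\mathbb{VAR}[\tilde{X}_{q}]<\mathrm{\mathbb{E}}[\tilde{X}_{q}]$ and $\mathbb{VAR}[\tilde{Z}]<\mathrm{\mathbb{E}}[\tilde{Z}]$ are strict, so the crude bound $AOD\leq\mathrm{\mathbb{E}}[\tilde{Z}]/(2\lambda)$ holds on both sides and fails to separate them). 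The telescoping-plus-monotone-function step is the refinement that does the job, and the delicate point is to keep every inequality strict so that the conclusion is genuinely ``larger''/``less'' rather than merely ``$\geq$''/``$\leq$''.
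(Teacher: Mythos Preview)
Your proposal is correct and, for the main assertion (monotonicity of $AOD_{RHP1}$ in the quantity parameter at fixed $\mathrm{\mathbb{E}}[C]$), follows the paper's proof essentially verbatim: pass to the zero-truncated counts $\tilde{U},\tilde{V}$, use Lemma~\ref{largerconditional} to obtain the stochastic ordering, and then apply Lemma~\ref{E2<E2}. The only substantive difference is in the ``in particular'' comparison with revised TP1. The paper dispatches this in one line by declaring revised TP1 to be revised HP1 with quantity parameter $\infty$ and appealing to the just-proved monotonicity; you correctly note that this limiting step needs a convergence argument and supply it, but you also give a self-contained direct proof via the telescoping identity $\mathrm{\mathbb{E}}[\tilde{Z}^{2}]-\mathrm{\mathbb{E}}[\tilde{Z}_{q}^{2}]=\sum_{m\geq q}(2m+1)\mathbb{P}(\tilde{Z}\geq m+1)$ combined with the monotonicity of $u\mapsto(2q+1)u-u^{2}$ on $\{0,\dots,q\}$. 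This is a genuine (if minor) refinement: it yields the strict inequality $AOD_{RHP1}<AOD_{RTP1}$ without any limit and, as you observe, bypasses the variance bound of Theorem~\ref{HPbetterthanTP}, which indeed becomes non-separating here since $\mathbb{VAR}[\tilde{Z}]<\mathrm{\mathbb{E}}[\tilde{Z}]$ holds on both sides.
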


\begin{proof}
We consider a fixed $\mathrm{\mathbb{E}}[C]$ and use the following
notation for the corresponding policy parameters under this $\mathrm{\mathbb{E}}[C]$
value: a revised HP1 with parameters $q_{H}$ and $T_{H}$, the other
revised HP1 with parameters $q_{H}+1$ and $T_{H}'$. Recalling Eq. (\ref{L-HP3})
and by assumption that the two revised HP1 have the same expected
cycle length, we have,
\begin{eqnarray}
\mathrm{\mathbb{E}}[\tilde{U}_{q_{H}}]=\mathrm{\mathbb{E}}[\tilde{V}_{q_{H}+1}],\label{VS-HP3}
\end{eqnarray}
where $\tilde{U}$ is distributed as $U\mid U>0$, $U\sim\text{Poisson}(\lambda T_{H})$,
and $\tilde{V}$ is distributed as $V\mid V>0$, $V\sim\text{Poisson}(\lambda T_{H}')$.
Clearly, $T_{H}>T_{H}'$. From Lemma \ref{largerconditional}, $\tilde{U}$
is stochastically larger than $\tilde{V}$.

Next, recalling Eq. (\ref{W-HP3}) and reiterating the assumption of fixed
$\mathrm{\mathbb{E}}[C]$, we proceed to show that
\begin{eqnarray}
\mathrm{\mathbb{E}}[\tilde{U}_{q_{H}}(\tilde{U}_{q_{H}}-1)]\leq\mathrm{\mathbb{E}}[\tilde{V}_{q_{H}+1}(\tilde{V}_{q_{H}+1}-1)].\label{VS-W-HP3}
\end{eqnarray}
From Lemma \ref{E2<E2}, and recalling Eq. (\ref{VS-HP3}), we have
\[
\mathrm{\mathbb{E}}[\tilde{U}_{q_{H}}^{2}]\leq\mathrm{\mathbb{E}}[\tilde{V}_{q_{H}+1}^{2}],
\]
so that inequality (\ref{VS-W-HP3}) is verified.

Revised TP1 can be considered as revised HP1 with quantity parameter $\infty$. Therefore, under the same expected consolidation cycle $\mathrm{\mathbb{E}}[C]$,
revised HP1 achieves less $AOD$ than revised TP1.
\end{proof}

\end{document}